\theoremstyle{plain}
\newtheorem{Theorem}{Theorem}[section]
\newtheorem{Corollary}[Theorem]{Corollary}
\newtheorem{Proposition}[Theorem]{Proposition}
\newtheorem{Conjecure}[Theorem]{Conjecture}
\newtheorem{Question}[Theorem]{Question}
\theoremstyle{definition}
\newtheorem{Definition}[Theorem]{Definition}
\newtheorem{Example}[Theorem]{Example}
\theoremstyle{Remark}
\newtheorem{Remark}{Remark}
\newcommand{\lar}{\longrightarrow}
\newcommand{\surjects}{\twoheadrightarrow}
\newcommand{\overbar}[1]{\mkern 1.5mu\overline{\mkern-1.5mu#1\mkern-1.5mu}\mkern 1.5mu}
\def\FF{{\bf F}}
\def\fm{{\mathfrak m}}
\def\fp{{\mathfrak p}}
\def\ffr{{\mathfrak f}}
\def\gr#1#2{{\rm gr}\, _{#1}(#2)}
\def\gr{{\rm gr}\,}
\def\hht{{\rm ht}\,}
\def\spec{{\rm spec}\,}
\def\ker{{\rm ker}\,}
\def\syz{\mbox{\rm Syz}}
\def\spec#1{{\rm Spec}\, (#1)}
\def\proj#1{{\rm Proj}\, (#1)}
\def\Rees{{\mathcal R}}
\def\Rees{\mathcal{R}}
\def\syz{\mathcal{Z}}
\def\pp{{\mathbb P}}
\begin{document}

\title[Torsion-free Aluffi Algebras]{Torsion-free Aluffi Algebras}
\author[A. Nasrollah Nejad, Z. Shahidi, R. Zaare-Nahandi ]{Abbas Nasrollah Nejad, Zahra Shahidi, Rashid Zaare-Nahandi }
\address{department of mathematics institute for advanced studies in basic sciences (IASBS) p.o.box 45195-1159  zanjan, iran.
}
\email{abbasnn@iasbs.ac.ir, z.shahidi@iasbs.ac.ir, rashidzn@iasbs.ac.ir}

\subjclass[2010]{primary 13A30, 13C12, 14C17; secondary 14B05,  13E15}
\keywords{ Aluffi Algebra, Aluffi torsion-free ideal,  Blowup Algebra,  Associated graded ring. }
\begin{abstract}
 A pair of ideals $J\subseteq I\subseteq R$ has been called Aluffi torsion-free if the Aluffi algebra of $I/J$ is isomorphic with the corresponding Rees algebra. We  give necessary and sufficient conditions for the Aluffi torsion-free property  in  terms of the first syzygy module of the  form ideal $J^*$ in the associated graded ring of $I$. For two pairs of ideals $J_1,J_2\subseteq I$ such that $J_1-J_2\in I^2$, we prove that  if one pair is Aluffi torsion-free the other one is so if and only if the first syzygy modules of $J_1$ and $J_2$ have the same form ideals.  We introduce the notion of strongly Aluffi torsion-free ideals and present some results on these ideals.
 \end{abstract}
\maketitle
\section*{Introduction}
P. Aluffi in (\cite{aluffi}) to describe characteristic cycles of a hypersurface parallel to well known conormal cycle in intersection theory  introduces an intermediate graded  algebra between the symmetric algebra of an ideal and the corresponding Rees algebra. The first author and A. Simis~(\cite{AA}) called such an  algebra, the \textit{Aluffi algebra}. Given a Notherian ring  $R$ and
ideals $J\subset I$ of $R$, the Aluffi algebra of $I/J$ is defined by
$$\mathcal{A}_{_{R/J}}(I/J):=\mathrm{Sym}_{R/J}(I/J)
\otimes_{\mathrm{Sym}_R(I)}\Rees_R(I).$$
The Aluffi algebra is squeezed as $\mathrm{Sym}_{R/J}(I/J)\surjects \mathcal{A}_{_{R/J}}(I/J)\surjects\Rees_{R/J}(I/J)$ and 
moreover it is a residue ring of the ambient Rees algebra $\Rees_R(I)$. The kernel of the right hand surjection so called the module of  \textit{Valabrega-Valla} as defined in (\cite{Wolmbook1}),  is the torsion of the Aluffi algebra. Thus the Rees algebra of $I/J$ is the Aluffi algebra modulo its torsion  provided that  $I$ has a regular element modulo $J$.

It is reasonable to ask when the the surjection $\mathcal{A}_{_{R/J}}(I/J)\surjects\Rees_{R/J}(I/J)$ is  an isomorphism?  Geometrically, this question is important  form  two points of view. More precisely, the blowup of $X=\spec{R/J}$ along the closed subscheme $Y$ defined by the ideal $I/J$ is equal to  $\proj{\mathcal{A}_{R/J}(I/J)}$.  Hence to find the 
equations of the blowup of $X$ along $Y$, we just need to find the equations of the blowup of ambient space $R$ along $X$. For the other one, let $x\in X=\spec{R/J}$ be a point then the tangent cone of $X$ at  $x$ is the cone $\spec{\gr_{\fm_x}(\mathcal{O}_{X,x})}$, where $\fm_x$ is the maximal ideal of the local ring  $\mathcal{O}_{X,x}\simeq R_{\fp_x}/JR_{\fp_x}$. So that we may assume  $\mathcal{O}_{X,x}$ is the quotient of a regular local ring $(R,\fm)$ with respect of an ideal $J$. Then the associated graded ring  $\gr_{\fm/J}(R/J)$ of $\fm/J$ is isomorphic to $\gr_{\fm}(R)/J^*$ where $J^*$ is the form ideal. The problem of determining elements $f_1,\ldots,f_t$ such that their initial forms $f_1^*,\ldots,f_t^*$ generate $J^*$ is an essential problem  in resolution of singularities. Also the torsion-free Aluffi algebras are crucial in intersection theory of regular and linear embedding (\cite{fulton}, \cite{keel}). The outline of this paper is as the follow.

In section 1, we give necessary and sufficient conditions for  torsion-free Aluffi algebra, involving the standard base (in the sense of Hironaka (\cite{H.H})) and  the first syzygy module of the form ideal in the associated graded ring.
 
Let $J\subseteq I$ be ideals in the ring $R$.  We say that the pair $J\subseteq I$ is \textit{Aluffi torsion-free} if $J\cap I^n=JI^{n-1}$ for all $n\geq 1$.
In section 2, we study the behavior of the Aluffi torsion-free property with respect to contraction and extension. We prove that the sum of two Aluffi torsion-free ideals  is Aluffi torsion-free if and only if one of them modulo the other is Aluffi torsion-free. As the main result of this section, we prove that if $J_1, J_2\subseteq I$ such that $J_1\equiv J_2$ modulo $I^2$ and $J_1\subseteq I$ is Aluffi torsion-free then $J_2\subseteq I$ is Aluffi torsion-free if and only if the first syzygy modules of $J_1$ and $J_2$ have the same form ideals in the associated graded module  $\gr_I(R^m)$ where $m$ is the number of generators of $J_1$ and $J_2$ (Theorem \ref{I-adic}). In sequel, we introduce the notion of strongly Aluffi torsion-free ideals. A pair $J=(f_1,\ldots,f_t)\subseteq I\subseteq R$ is called \textit{strongly  Aluffi torsion-free}  if $J_i=(f_1,\ldots,f_i)$ is Aluffi torsion-free for $i=1,\ldots, t$. We give an example of Aluffi torsion-free pair of  ideals which is not strongly Aluffi torsion-free. In the case that, $J\subseteq I$ is Aluffi torsion-free, we give a criterion for strongly Aluffi torsion-freeness. We close the section with this result:  let  $J_1,J_2\subseteq I$ be ideals in the ring $R$ such that the extension of $J_2$ and $I$ in the ring $R/J_1$ is Aluffi torsion-free. If  there exists a minimal generating set $f_1,\ldots,f_t$ of $J_1$ such that $J_1\subseteq I$ is strongly Aluffi torsion-free and extension of the sequence $f_1,\ldots,f_t$ in $R/J_2$ is regular then $J_2\subseteq I$ is Aluffi torsion-free. 

In section 3,  we focus on the case that $J$ is an ideal in the polynomial ring $R=k[x_0,\ldots,x_n]$ over a field $k$ of characteristic zero and the ideal $I$ stands for the Jacobian ideal of $J$ which describe the singular subscheme  of  $\spec {R/J}$. We prove that if $J$ is the ideal of a  monomial curve with some special parametrization or $J$ is the  square-free Veronese ideal of degree $r$, then $J\subseteq I$ is Aluffi torsion-free.  We close the paper with a question related  to Aluffi torsion-freeness of free line arrangements.

\section{The Aluffi algebra and its torsion}
Throughout this section $R$ will be a Notherian ring. Let  $ J \subseteq I\subseteq R$ be ideals. There are two important algebras related to these data. The first one is the Symmetric algebra  $\mathrm{Sym}_{R}(I)$  and the second one is the Rees algebra,
$\mathcal{R}_{R}(I)=\bigoplus_{n \geq 0} I^{n}t \subset R[t] $.
It is well-known that there is a natural surjective $ R-$algebra homomorphism 
$ \mathrm{Sym}_{R}(I) \twoheadrightarrow \mathcal{R}_{R}(I)$. By functorial  property of Symmetric algebra there is an other surjection $ \mathrm{Sym}_{R}(I)\twoheadrightarrow \mathrm{Sym}_{R/J}(I/J)$. The (emmbeded) Aluffi algebra is defined by 
$$ \mathcal{A}_{R/J}(I/J)= \mathrm{Sym}_{R/J}(I/J)\otimes_{\mathrm{Sym}_{R}(I)} \mathcal{R}_{R}(I).$$
By \cite[Lemma 1.2]{AA}, there are $R$-algebra isomorphisms
$$ \mathcal{A}_{R/J}(I/J)\simeq \mathcal{R}_{R}(I)/(J,\tilde{J}) \mathcal{R}_{R}(I) \simeq\bigoplus _{n \geq 0} I^{n}/JI^{n-1}, $$
where $J$ is in degree zero and $\tilde{J}$ is in degree 1. The Rees algebra of $I/J$ is 
\[ \mathcal{R}_{R/J}(I/J)\simeq \bigoplus_{n\geq 0} I^n/J\cap I^{n}. \]
Then there is a surjective $R$-algebra homomorphism 
$  \mathcal{A}_{R/J}(I/J) \twoheadrightarrow \mathcal{R}_{R/J}(I/J).$
The kernel of the above surjection is the homogeneous ideal  $$ \mathcal{W}_{ J\subset I}:=\bigoplus_{n \geq 2} J \cap I^{n}/JI^{n-1},$$ 
which is called the \textit{ module of Valabrega-Valla}. If $J$ has a regular element modulo $I$, the Valabrega-Valla 's module is the $R/J$-torsion of the Aluffi algebra \cite[Proposition 2.5]{AA}.
The module of Valabrega-Valla has close relation to the theory of standard base (\cite{H.H}). To make a further development on this relation, we recall some facts about the filtered rings and  modules. 

A filtration on the ring $R$ is a decreasing sequence of ideals $\{\FF_n R\}_{n\geq 0}$ satisfying $(\FF_nR)(\FF_mR)\subseteq \FF_{n+m}R$ for all $n,m\geq 0$. The pair $(R,\FF_nR)$ is called a \textit{filtered ring}.  For an ideal $I$ of a ring $R$ there is the $I$-adic filtration $\FF_n R=I^n$. A morphism of filtered rings $\varphi: (R,\FF_n R)\lar (S, \FF_n S) $ is a homomorphism  of rings $\varphi: R\lar S$ such that  $\varphi(\FF_n R)\subseteq \FF_n S$ for all $n\geq 0$. 

Let $(R,\FF_n R)$ be a filtered ring and $M$ a $R$-module. A filtration on the module $M$ is a decreasing sequence $\{\FF_n M\}_{n\geq 0 }$ of submodules of $M$ such that $(\FF_nR)(\FF_nM)\subseteq \FF_{n+m} M$ for all $m,n\geq 0$. The pair $(M,\FF_n M)$ is called a \textit{filtered $(R,\FF_nR)$-module}. A morphism of filtered $(R,\FF_n R)$-modules $\varphi: (M,\FF_n M)\lar (N,\FF_n N)$ is a $R$-module homomorphism $\varphi: M\lar N$ such that $\varphi(\FF_n M)\subseteq\FF_n N $ for all $n\geq 0$. This implies $\varphi(\FF_n M)\subseteq \varphi(M)\cap \FF_n N$. The morphism $\varphi$ is called \textit{strict} if $\varphi(\FF_n M)= \varphi(M)\cap \FF_n N$. If $M$ is a $R$-module then $(M,\FF_n M)$ with $\FF_n M:=(\FF_n R)M$ is a filtered $(R,\FF_n R)$-module. A sequence of filtered $(R,\FF_n R)$-modules is called exact if the sequence of underlying $R$-modules is exact. It is called strict if all morphisms are strict. 
\begin{Remark}\label{Inj-Sur-Sum}
Let $(M,\FF_n M)$ be a filtered $(R,\FF_n R)$ module.
\begin{enumerate}
	\item[{\rm (a)}]  Let  $\varphi: L\lar M$ an  injective  homomorphism  of $R$-modules. For all $n\geq 0$, put $\FF_n L:=\varphi^{-1}(\FF_n M)$. This makes $(L,\FF_n L)$ into a filtered module and $\varphi: (L,\FF_n L)\lar (M,\FF_n M) $ is a strict morphism.
	\item[{\rm (b)}] Let $\varphi: M\lar N$ be a surjective homomorphism  of $R$-modules. For all $n\geq 0$ put $\FF_n N:=\varphi(\FF_n M)$. This makes  $(N,\FF_n N)$ into a filtered module and $\varphi: (M,\FF_n M)\lar (N,\FF_n N) $ is a strict morphism. 
\end{enumerate}
 \end{Remark}
The \textit{associated graded ring}  of a filtration $ (R,\FF_n R)$ is $\gr(R)=\bigoplus_{n\geq 0} \FF_n R/\FF_{n+1} R$. We denote by  $\gr_I(R)$ for the $I$-adic filtration. If $(M,\FF_n M)$ is a filtered module, its \textit{associated graded module} $\gr(M)=\bigoplus_{n\geq 0} \FF_n M/\FF_{n+1} M$ is the graded $\gr(R)$-module. In the case of $I$-adic filtration we write $\gr_I(M)$. It is clear that $\gr(-)$ is a functor form the category of filtered modules to the category of graded modules. 
\begin{Proposition}[\cite{Griaud}, I Proposition 2.1]\label{Girud1}
Let $(R,\FF_nR)$ be a filtered ring and $$(L,\FF_n L)\stackrel{\varphi}\lar (M,\FF_n M) \stackrel{\phi}\lar (N,\FF_n N),$$  a strict exact sequence of  filtered $(R,\FF_nR)$-modules. Then the induced sequence $\gr(L)\stackrel{\gr(\varphi)}\lar\gr(M)\stackrel{\gr(\phi)}\lar \gr(N)$ is an exact sequence of $\gr(R)$-modules. 
\end{Proposition}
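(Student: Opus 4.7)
The plan is to verify exactness of the induced sequence on each homogeneous component, that is, to show that $\mathrm{im}(\gr(\varphi)_n) = \ker(\gr(\phi)_n)$ inside $\FF_nM/\FF_{n+1}M$ for every $n\geq 0$. One inclusion comes essentially for free from functoriality: since $\phi\circ\varphi=0$ at the level of underlying modules, the composition $\gr(\phi)_n\circ\gr(\varphi)_n$ vanishes as well, which gives $\mathrm{im}(\gr(\varphi)_n)\subseteq\ker(\gr(\phi)_n)$.

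The content of the statement lies in the reverse inclusion, and this is where both strictness hypotheses enter in an essential way. Given a class $\overline{x}\in\ker(\gr(\phi)_n)$ with representative $x\in\FF_nM$, the hypothesis that the class vanishes in degree $n$ means $\phi(x)\in\FF_{n+1}N$. Then $\phi(x)\in\phi(M)\cap\FF_{n+1}N$, and strictness of $\phi$ rewrites this intersection as $\phi(\FF_{n+1}M)$, producing $x'\in\FF_{n+1}M$ with $\phi(x)=\phi(x')$. Exactness of the underlying sequence gives $x-x'\in\ker(\phi)=\mathrm{im}(\varphi)$, so $x-x'=\varphi(y)$ for some $y\in L$. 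Because $x-x'\in\FF_nM$, the element $\varphi(y)$ lives in $\varphi(L)\cap\FF_nM$, and strictness of $\varphi$ yields $y'\in\FF_nL$ with $\varphi(y')=x-x'$. Passing to the associated graded, $\gr(\varphi)_n(\overline{y'})=\overline{x-x'}=\overline{x}$ in $\FF_nM/\FF_{n+1}M$, since $x'\in\FF_{n+1}M$.

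The main obstacle, such as it is, is purely bookkeeping of filtration indices: the argument genuinely fails without the strictness of either morphism, so one must check at each step that the element produced lies in the asserted $\FF_i$ rather than merely in some larger $\FF_j$ with $j<i$. The conceptual takeaway is that strictness is exactly the condition needed for the functor $\gr$ to commute with the formation of kernels and images, so the claim follows by a diagram chase once the two strictness conditions are correctly invoked in order.
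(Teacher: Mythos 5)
Your diagram chase is correct and complete: the inclusion $\mathrm{im}(\gr(\varphi))\subseteq\ker(\gr(\phi))$ follows from functoriality, and the reverse inclusion correctly invokes strictness of $\phi$ to replace a representative by one congruent modulo $\FF_{n+1}M$ lying in $\ker(\phi)=\mathrm{im}(\varphi)$, then strictness of $\varphi$ to lift it to $\FF_nL$. The paper itself cites this result from Giraud without reproducing a proof, and your argument is the standard one, so there is nothing to compare against and nothing to correct.
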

If $m\in M$ we denote by $\nu_{\FF}(m)$ the largest integer $n$ such that $m\in \FF_n M$. If such $n$ dose not exist we say $\nu_{\FF}(m)=\infty$ and if $\nu_{\FF}(m)< \infty$, we denote by $m^*$ the residue class of $m$ in $\FF_{\nu_{\FF}(m)}M/\FF_{\nu_{\FF}(m)+1}M$, which is called the \textit{initial form} of $m$. If $\nu_{\FF}(m)=\infty$, then we set $m^*=0$. For $m_1,m_2\in M$ if $m_1^*+m_2^*\neq 0$, then $m_1^*+m_2^*=(m_1+m_2)^*$. If the filtration $\FF_n M$ is multiplicative then $\gr(M)$ is a ring and if $m_1^*m_2^*\neq 0$ then $(m_1m_2)^*=m_1^*m_2^*$.  
 
Let  $R$ be a ring and $J\subseteq I$ ideals of $R$. Given an element  $f\in R$, we denote by $\nu=\nu_I(f)$ the number $\nu_{\FF}(f)$ with  $\FF_nR=I^n$.  We denote by $J^*$ the homogeneous ideal of $\gr_I(R)$ generated by the initial forms of the elements of $J$.   A set of generators $ \{f_{1} ,\ldots, f_{t}\}$ of $ J$ is called $ I$-\textit{standard base } if $ J^{*}=(f_{1}^{*},\ldots,f_{t}^{*})$. When  $ R$ is local, then an $ I$-standard base of $ J$ is a generating set \cite[Lemma 6]{H.H}. 

The following remark give necessary and sufficient conditions for the surjection $\mathcal{A}_{R/J}(I/J) \twoheadrightarrow \mathcal{R}_{R/J}(I/J)$ to be an isomorphism.      
\begin{Remark}\label{VV-Aluffi}\rm
Let $J\subseteq I\subseteq R$ be ideals of the local ring $R$. By \cite[Theorem 1.1]{VaVa} the following are equivalent.
\begin{enumerate}
\item[{\rm (a)}]$ \mathcal{A}_{R/J}(I/J)\simeq \mathcal{R}_{R/J}(I/J)$. 
\item[{\rm (b)}]$ J\cap I^{n}=JI^{n-1}$ for any $ n\geq 1$.
\item[{\rm (c)}]$ I^{n+1} \cap JI^{n-1}=JI^{n}$ for any $ n \geq 1 $.
\item[{\rm (d)}] There exists a minimal set of generators $f_1,\ldots, f_t$ of $J$ such that $\{f_1,\ldots, f_t\}$ is a $I$-standard base of $J$ and  $ \nu_I(f_{i})=1$ for $i=1,\ldots, t$.

\end{enumerate} 

\end{Remark}

Let now $J=(f_1,\ldots,f_t)$ and $\nu_I(f_i)=1$. Consider the exact sequence 
\begin{equation}\label{syz-sequence}
0\lar \mathcal{Z}\stackrel{i}\hookrightarrow R^t\stackrel{\mathfrak{f}}\lar R\stackrel{\pi}\twoheadrightarrow  R/J\lar 0,
\end{equation}
where $\mathfrak{f}(a_1,\ldots,a_t)=\sum_{i=1}^{t}a_if_i$ and $\mathcal{Z}=\mathrm{Syz}(J)$ is the first syzygy module of $J$. By Remark (\ref{Inj-Sur-Sum}), we consider the following filtrations 
\[\FF_n R^t=\bigoplus_{i=1}^t I^{n-1}\quad,\quad \FF_n \syz=\FF_nR^t\cap \syz\quad,\quad \FF_n R/J=(I/J)^n,\]
which  make $i, \mathfrak{f}$ and $\pi$ the morphisms of filtered $(R,\FF_nR=I^n)$-modules. 
 Note that $i$ and $\pi$ are strict. By Proposition (\ref{Girud1}), we get the corresponding complex of graded modules
\begin{equation}\label{gr-I}
\nonumber 0\lar \gr_I(\syz)\stackrel{\gr(i)}\lar \gr_I(R^t) \stackrel{\gr(\mathfrak{f})}\lar \gr_I(R) \stackrel{\gr(\pi)}\lar \gr_{I/J}(R/J)\lar 0.\quad\quad  (*)
\end{equation}
Note that $\gr_I(R^t)=\bigoplus_{i=1}^t \gr_I(R)(-1)$. The map $\gr(\mathfrak{f})$ is defined by $e_i\mapsto f_i^*$, the map $\gr(i)$ is inclusion  and $\gr(\pi)$ is surjective. We have 
\[\ker(\gr(\pi))=\bigoplus_{n\geq 0}(I^{n+1}+J\cap I^n)/I^{n+1} =J^*\quad ,\quad \ker(\gr(\mathfrak{f}))=\mathrm{Syz}(J^*). \]
Given an element $(a_1,\ldots,a_t)\in \syz$, if $(a_1,\ldots,a_t)\neq(0,\ldots,0)$ there exist $m\geq 0$ such that $\nu_{\FF_n\syz}(a_1,\ldots,a_t)=m$, this means that $\nu_I(a_j)\geq m-1$ for every $i$ and there exists $j\in \{1,\ldots, t\}$
 such that $\nu_I(a_j)=m-1$.   
Hence  $ \psi : \mathcal{Z} \longrightarrow \gr_{I}(\mathcal{Z})$ is the canonical map which associates to every element of $\mathcal{Z} $ its initial form  in $ \gr_{I}(\mathcal{Z})$, that is,  $\psi(a_1,\ldots,a_t)=\overbar{(a_1,\ldots,a_t)}\in \FF_m\syz/\FF_{m+1}\syz$. On the other hand, since the sequence $(*)$ is a complex hence  there is a canonical embedding $\varphi: \gr_I(\syz)\hookrightarrow \mathrm{Syz}(J^*)$ which  sends every element  $\overline{(a_1,\ldots,a_t)}\in \syz \cap \FF_mR^t/\syz\cap \FF_{m+1}R^t$ to $(\overline{a_1},\ldots, \overline{a_t})$ where  $\overline{a_i}$ is the residue class of $a_i$ in $I^{m-1}/I^m$. Therefore, we get a map 
$$\varphi\circ\psi: \syz\lar \mathrm{Syz}(J^*)\quad,\quad \varphi\circ\psi((a_1,\ldots,a_t))=(\overline{a_1},\ldots, \overline{a_t}).$$
Note that $\overline{a_i}=a_i^*$ if $\nu_I(a_i)+1=\min_j\{\nu_I(a_j)+1\}$ and  $ \overbar{a_{i}}=0 $ if $ \nu(a_{i})+1 > \min_j\{\nu(a_{j})+1\}$. 
The following theorem relate the torsion of the Aluffi Algebra to the first syzygy module  of  the form ideal $J^*\subseteq \gr_I(R)$. 
\begin{Theorem}\label{Torsion-Syzygy}
Let $ J=(f_1,\ldots, f_t) \subseteq I$ be ideals in the local ring $R$. The following are equivalent. 
	\begin{enumerate}
		\item[{\rm (a)}]$ \mathcal{A}_{R/J}(I/J)\simeq \mathcal{R}_{R/J}(I/J)$.
		\item[{\rm (b)}] The complex $(*)$ is exact. 
		\item[{\rm (c)}] There exist a homogeneous system of generators of $\mathrm{Syz}(J^*)$, whose elements can be lifted to elements of $\syz$ via $\varphi\circ\psi$. 
		
	\end{enumerate}
\end{Theorem}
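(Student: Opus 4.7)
The plan is to dissect the exactness of the complex $(*)$ position by position, translate each exactness statement into concrete ideal-theoretic content, and then use Proposition~\ref{Girud1} together with Remark~\ref{VV-Aluffi} to reach condition (a). Two of the four exactness conditions are automatic: at $\gr_{I/J}(R/J)$ the map $\gr(\pi)$ is surjective by construction, and at $\gr_I(\syz)$ the map $\gr(i)$ is injective because the inclusion $i$ is strict with respect to the filtration $\FF_n\syz = \syz \cap \FF_n R^t$. Hence the content of (b) is concentrated at the two middle positions. Using the formulas $\mathrm{image}(\gr(\mathfrak{f})) = (f_1^*,\ldots,f_t^*)$, $\ker(\gr(\pi)) = J^*$, $\ker(\gr(\mathfrak{f})) = \mathrm{Syz}(J^*)$ and $\mathrm{image}(\gr(i)) = \varphi(\gr_I(\syz))$, exactness at $\gr_I(R)$ is the $I$-standard base property $(f_1^*,\ldots,f_t^*) = J^*$, while exactness at $\gr_I(R^t)$ is the surjectivity of $\varphi$.

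For (a)~$\Leftrightarrow$~(b), I would apply Proposition~\ref{Girud1} to the filtered exact sequence $0 \lar \syz \lar R^t \stackrel{\mathfrak{f}}\lar J \lar 0$, equipping $J$ with the filtration $\FF_n J = J \cap I^n$ induced from $R$. The inclusion $i$ is automatically strict, while $\mathfrak{f}$ is strict precisely when $\mathfrak{f}(\FF_n R^t) = JI^{n-1}$ coincides with $J \cap I^n$---exactly condition (a). Under (a), Proposition~\ref{Girud1} yields the exact sequence $0 \lar \gr(\syz) \lar \gr(R^t) \lar \gr(J) \lar 0$; identifying $\gr(J) \simeq J^* \subseteq \gr_I(R)$ (which holds thanks to the induced filtration on $J$) and splicing with the surjection $\gr_I(R) \surjects \gr_{I/J}(R/J)$ with kernel $J^*$ produces the exactness of $(*)$. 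Conversely, exactness of $(*)$ at $\gr_I(R)$ yields the standard base condition, and together with the standing hypothesis $\nu_I(f_i) = 1$, Remark~\ref{VV-Aluffi}(d) gives (a) (equivalently, one can derive (a) directly by iterating $J \cap I^n \subseteq JI^{n-1} + I^{n+1}$ and appealing to Krull's intersection in the Noetherian local setting).

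For (b)~$\Leftrightarrow$~(c), the only remaining point is the surjectivity of $\varphi$. Since every homogeneous element of $\gr_I(\syz)$ is the initial form of some element of $\syz$, the set-image of $\varphi \circ \psi$ coincides with the image of the $\gr_I(R)$-module homomorphism $\varphi$; as $\mathrm{Syz}(J^*)$ is generated by any of its homogeneous generating sets, $\varphi$ is surjective if and only if some such generating set lifts through $\varphi \circ \psi$, which is precisely (c). The conceptual heart of the argument---and its main obstacle---is to recognize that (a) is nothing other than the strictness of the map $\mathfrak{f}$ for the natural $I$-adic filtrations; once this identification is made, Proposition~\ref{Girud1} takes care of the rest, though one must still be careful in splicing the two short strict exact sequences at the graded level to identify $\gr(J)$ correctly with $J^* \subseteq \gr_I(R)$.
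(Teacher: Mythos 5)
Your reduction of the exactness of $(*)$ to the two middle spots, and your proof of (a)~$\Leftrightarrow$~(b) via strictness of $\mathfrak{f}$ and Proposition~\ref{Girud1} (with the iteration of $J\cap I^n\subseteq JI^{n-1}+I^{n+1}$ plus Krull's intersection theorem backing up the return direction), are correct and essentially the paper's route for those implications; (b)~$\Rightarrow$~(c) is also fine. The gap is in (c)~$\Rightarrow$~(b). As you correctly observe, condition (c) is equivalent to the surjectivity of $\varphi$, i.e.\ to exactness of $(*)$ at $\gr_I(R^t)$ \emph{only}. But (b) also demands exactness at $\gr_I(R)$, i.e.\ the standard-base condition $J^*=(f_1^*,\ldots,f_t^*)$, and your assertion that ``the only remaining point is the surjectivity of $\varphi$'' silently assumes that exactness at $\gr_I(R^t)$ forces exactness at $\gr_I(R)$. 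That implication is true here but is not automatic (exactness of a complex at one spot does not propagate to the next), and it is precisely where the substantive work of the theorem lies; as written, your implications form the chain (a)~$\Leftrightarrow$~(b)~$\Rightarrow$~(c) with no way back from (c).

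The missing argument is the descent that constitutes the paper's proof of (c)~$\Rightarrow$~(a): given $b\in J\cap I^n$, write $b=\ffr(a)$ with $a\in\FF_mR^t$ and $m$ maximal. If $m<n$, then $\ffr(a)=b\in I^n\subseteq \FF_{m+1}R$, so the class of $a$ in degree $m$ lies in $\ker(\gr(\ffr))=\mathrm{Syz}(J^*)$; surjectivity of $\varphi$ then gives $a\in\FF_m\syz+\FF_{m+1}R^t$, hence $b=\ffr(a')$ for some $a'\in\FF_{m+1}R^t$. Iterating finitely many times until $m\geq n$ yields $b\in\ffr(\FF_nR^t)=JI^{n-1}$, i.e.\ condition (a), whence (b) follows by your own (a)~$\Rightarrow$~(b). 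You should add this step. A minor further caveat: invoking Remark~\ref{VV-Aluffi}(d) for (b)~$\Rightarrow$~(a) requires a \emph{minimal} generating set, which your $f_i$ need not form; your alternative via Krull's intersection theorem avoids this and should be the argument of record.
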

\begin{proof}
First note that the Aluffi algebra is torsion-free if and only if the map $\mathfrak{f}$ in the sequence (\ref{syz-sequence}) is strict. Thus (a) implies (b) by Proposition (\ref{Girud1}). Assume that the complex $(*)$  is exact. Then by above $\gr_I(\syz)=\mathrm{Syz}(J^*)$ which yields (c). Finally, we prove that (c) implies (a). The map $\Theta: \mathrm{Syz}(J^*)\lar \gr_I(\syz)$ is inverse of $\varphi$ which  is defined by sending an element $s\in \mathrm{Syz}(J^*) $ to $\psi(a_1,\ldots,a_t)$ where $\varphi\circ\psi((a_1,\ldots,a_t))=s$. Hence $\gr_I(\syz)\simeq \mathrm{Syz}(J^*)$. The latter implies that $J\cap I^n=JI^{n-1}$ for all $n\geq 1$. In fact, let $ b \in J\cap  I^{n}= \ffr(R^t)\cap \FF_n R$, then $ b=\ffr((a_1,\ldots,a_t))$ with $(a_1,\ldots,a_t)$ belonging to some $\FF_mR^t$. If $m\geq n$, we get the assertion. If $m<n$, by using the exactness of   $0\lar \gr_I(\syz)\stackrel{\gr(i)}\lar \gr_I(R^t)$, we get  $$(a_1,\ldots,a_n)\in \ffr^{-1}(\FF_{m+1} R)\cap \FF_m R^t=\FF_m \syz+\FF_{m+1} R^t.$$
Hence $b=\ffr((c_1,\ldots,c_t))$ with $(c_1,\ldots,c_t)\in \FF_{m+1}R^t$. Repeating  this argument finitely many times, finally we get $b\in \ffr(\FF_n R^t)=JI^{n-1}$ which complete the proof. 
\end{proof}
\section{Aluffi torsion-free ideals}
In this section we assume that all rings are Notherian. Let $J\subseteq I$ be ideals in the ring $R$. If $J\subseteq I$ satisfy in one of the equivalent  conditions in  the  Remark (\ref{VV-Aluffi}) or the Theorem (\ref{Torsion-Syzygy}) then the Aluffi algebra is torsion-free. Therefore we have the following definition. 
 
\begin{Definition}\rm
A pair of ideals $J\subseteq I$ in  the ring $R$ is called \textit{Aluffi torsion-free} if $J\cap I^n=JI^{n-1}$ for all $n\geq 1$.  
\end{Definition}
\begin{Example}\rm
There are well-known examples of Aluffi torsion-free ideals. 
\begin{enumerate}
\item If  $ I/J $ in $ R/J$ is of linear type (e.g., if $ I$ is generated by regular or, more generally by a  $d$-sequence modulo $ J$ in the sense of Huneke (\cite{Huneke})) then $J\subseteq I$ is Aluffi torsion-free.  
\item  If  $ J$ is generated by superficial sequence in $ I$ then the pair  $J\subseteq I$ is Aluffi torsion-free \cite[Lemma 8.5.11]{HuSw}. 
\end{enumerate}

\end{Example}
The following result indicate to the behavior of  Aluffi torsion-free property with respect to extension and contraction. In particular, it shows that the Aluffi torsion-free property is local.  
\begin{Proposition}\label{Extention of Rings} Let $J\subseteq I$ be ideals in the ring $R$. The following statements hold: 
\begin{enumerate}
\item [{\rm (a)}] Let $\mathfrak{a}\subseteq J$ be  another ideal.  If $J\subseteq I$ is Aluffi torsion-free then $\overline{J}\subseteq \overline{I}$ is Aluffi torsion-free in $\overline{R}=R/\mathfrak{a}$.
\item [{\rm (b)}] Let  $R\lar S$ be a flat homomorphism of rings. If $J\subseteq I$ is Aluffi torsion-free then $JS\subseteq IS$ is  Aluffi torsion-free in $S$.  
\item [{\rm (c)}] Let $R\lar S$ be a faithfully flat homomorphism of rings.  If the extension of ideals  $J\subseteq I$  in $S$ is Aluffi torsion-free then $J\subseteq I$ is Aluffi torsion-free in $R$. In particular, Assume that  $(R,\fm)$ is local. If  the extension of $J$ and $I$ in the $\fm$-adic completion $\hat{R}$ is Aluffi torsion-free then  so does $J\subseteq I$.  
\item [{\rm (d)}] The ideal  $J\subseteq I$ is Aluffi torsion-free if and only if $JR_{\fm}\subseteq IR_{\fm}$ is Aluffi torsion-free for every maximal ideal $\fm$ of $R$. 
\end{enumerate}
\end{Proposition}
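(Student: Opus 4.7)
My plan is to reduce each of the four parts to the single defining identity $J\cap I^n=JI^{n-1}$ and then exploit standard commutations of ideal intersections with quotients, flat base change, and localization. Nothing beyond elementary commutative algebra is needed; the only subtle point is making sure the implication runs in the correct direction in (c), where faithful flatness (as opposed to mere flatness) is indispensable.

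For part (a), since $\mathfrak{a}\subseteq J$, I would first observe that $J\cap(I^n+\mathfrak{a})=(J\cap I^n)+\mathfrak{a}$, because any element of the left-hand side differs from an element of $I^n$ by an element of $\mathfrak{a}\subseteq J$. Passing to $R/\mathfrak{a}$ then gives $\overline{J}\cap\overline{I}^{\,n}=((J\cap I^n)+\mathfrak{a})/\mathfrak{a}$, and the Aluffi torsion-free hypothesis rewrites this as $(JI^{n-1}+\mathfrak{a})/\mathfrak{a}=\overline{J}\,\overline{I}^{\,n-1}$, which is exactly what is required.

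For part (b), the key input is that a flat homomorphism $R\to S$ commutes with finite intersections of ideals, i.e.\ $(A\cap B)S=AS\cap BS$; this is obtained by applying $-\otimes_R S$ to the left-exact sequence $0\to A\cap B\to A\oplus B\to A+B$. Combined with $(JI^{n-1})S=(JS)(IS)^{n-1}$ and $(I^n)S=(IS)^n$, the identity $J\cap I^n=JI^{n-1}$ transports to $JS\cap(IS)^n=(JS)(IS)^{n-1}$. For part (c), I would run this calculation in reverse: the module $M:=(J\cap I^n)/JI^{n-1}$ satisfies $M\otimes_R S\cong (JS\cap(IS)^n)/(JS)(IS)^{n-1}$ by flatness, which vanishes by hypothesis, so faithful flatness forces $M=0$. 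The statement about the completion is then immediate because $\hat R$ is faithfully flat over any Noetherian local ring $(R,\fm)$.

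Part (d) is the combination of the preceding two: the forward direction is a special case of (b) applied to the flat localization $R\to R_\fm$, while for the converse the module $(J\cap I^n)/JI^{n-1}$ vanishes at every maximal ideal of $R$ and hence is zero by the usual local-global principle for modules. The main obstacle I foresee is only bookkeeping: one must remember that plain flatness suffices for the forward direction, whereas passing from $M\otimes_R S=0$ back to $M=0$ in (c) genuinely requires faithful flatness, and this is precisely why the completion statement in (c) is formulated in the local setting.
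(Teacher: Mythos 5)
Your proposal is correct and follows essentially the same route as the paper: reduce each part to the identity $J\cap I^n=JI^{n-1}$, use the modular law for (a), commutation of flat base change with finite intersections for (b), faithful flatness for (c), and the local-global principle for (d). The only cosmetic difference is in (c), where the paper contracts extended ideals via $IS\cap R=I$ while you instead deduce the vanishing of the module $(J\cap I^n)/JI^{n-1}$ from the vanishing of its faithfully flat base change; these are interchangeable standard arguments.
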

\begin{proof}
We prove (a), (b) and (c) by straightforward computations. We have  
$$\overline{J}\cap \overline{I}^{n}=\overline{J}\cap \overline{I^n}=\overline{J\cap I^n}=\overline{JI^{n-1}}=\overline{J}\ \overline{I}^{n-1},$$
which proves  (a). For (b), we have 
\begin{eqnarray}
\nonumber JS\cap (IS)^n=JS\cap I^nS &=&(J\otimes_R S)\cap (I^n\otimes_R S)\\
\nonumber & =& (J\cap I^n)\otimes_R S=(JI^{n-1})\otimes_R S\\
\nonumber &=& (JI^{n-1})S=(JS) (I^{n-1}S).
\end{eqnarray}
(c). As $S$ is faithfully flat over $R$, $IS\cap R=I$ for all ideals $I$ of $R$. We have 
\[J\cap I^n= (J\cap I^n)S\cap R\subseteq(JS\cap I^nS)\cap R= (JI^{n-1})S\cap R=J I^{n-1}.  \]
The second assertion yields from the fact  that $R\lar \hat{R}$ is  faithfully flat. 
The part (d) follow from part (b) and local-global  property.  
\end{proof}
\begin{Remark}\rm There is a natural question. What is the behavior of Aluffi torsion-free property with respect to operation of ideals?  Here are some easy facts about this question. 
\begin{enumerate}
\item The sum of two Aluffi torsion-free ideals is not Aluffi torsion-free (see Proposition \ref{sum of two ATF}).
\item The product and intersection of two Aluffi torsion-free need not to be Aluffi torsion-free. In $k[x,y,z]$,  consider the ideals $J_1=(xy)$ and $J_2=(yz) \subseteq I=(x,y,z)^2$ which are Aluffi torsion-free, but $J_1J_2=(xy^2z), J_1\cap J_2=(xyz)\subseteq I$ are not Aluffi torsion-free. 
\end{enumerate} 
\end{Remark}
\begin{Proposition}\label{sum of two ATF}
	Let  $ J_{1},J_{2} \subseteq I$ be Aluffi torsion-free ideals in the ring $ R$. Then $J_1+J_2\subseteq I$ is Aluffi torsion-free if and only if $\overline{J_1}\subseteq \overline{I}\subseteq \overline{R}=R/J_2 $ is Aluffi torsion-free.
\end{Proposition}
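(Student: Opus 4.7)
The plan is to rewrite the conclusion purely in terms of containments of ideals in $R$ and then do a short element chase, using the hypothesis only through $J_2\cap I^n=J_2I^{n-1}$. First I would translate the target equation $\overline{J_1}\cap\overline{I}^{\,n}=\overline{J_1}\,\overline{I}^{\,n-1}$ inside $R/J_2$: since $\overline{J_1}=(J_1+J_2)/J_2$ and $\overline{I}^{\,n}=(I^n+J_2)/J_2$, this is equivalent to
\[
(J_1+J_2)\cap (I^n+J_2)=J_1 I^{n-1}+J_2\qquad (n\ge 1).
\]
The inclusion $\supseteq$ is automatic because $J_1I^{n-1}\subseteq J_1+J_2$, $J_1I^{n-1}\subseteq I^n\subseteq I^n+J_2$, and $J_2$ sits in both factors. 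So everything reduces to showing the reverse inclusion is equivalent to $(J_1+J_2)\cap I^n=(J_1+J_2)I^{n-1}$.

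For the forward direction, assume $(J_1+J_2)\cap I^n=(J_1+J_2)I^{n-1}=J_1I^{n-1}+J_2I^{n-1}$. Given $x\in(J_1+J_2)\cap(I^n+J_2)$, write $x=y+z$ with $y\in I^n$ and $z\in J_2$. Then $y=x-z\in J_1+J_2$, hence $y\in (J_1+J_2)\cap I^n=J_1I^{n-1}+J_2I^{n-1}$, and absorbing the $J_2I^{n-1}$ part into $J_2$ with $z$ gives $x\in J_1I^{n-1}+J_2$, as desired. This direction does not use the Aluffi torsion-freeness of either $J_1$ or $J_2$ separately.

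For the converse I would use the hypothesis that $J_2\subseteq I$ is Aluffi torsion-free. Assume $(J_1+J_2)\cap(I^n+J_2)=J_1I^{n-1}+J_2$ and take $x\in(J_1+J_2)\cap I^n$. Since $x$ lies in $(J_1+J_2)\cap(I^n+J_2)$, write $x=u+v$ with $u\in J_1I^{n-1}$ and $v\in J_2$. Because $J_1\subseteq I$, we have $u\in I^n$, so $v=x-u\in I^n\cap J_2=J_2I^{n-1}$ by the Aluffi torsion-free assumption on $J_2$. Hence $x\in J_1I^{n-1}+J_2I^{n-1}=(J_1+J_2)I^{n-1}$, finishing the proof. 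The only nontrivial step is this last element chase; the key point is that $J_1\subseteq I$ forces $J_1I^{n-1}\subseteq I^n$, which lets us transfer the $I^n$-membership from $x$ to $v$ and then invoke the hypothesis on $J_2$.
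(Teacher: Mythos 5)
Your proof is correct, and it follows the same basic strategy as the paper's (translate the quotient statement into containments in $R$, then chase elements), but your converse is organized differently and is actually slightly more economical. The paper's converse first splits $y\in(J_1+J_2)\cap I^n$ as $a+c$ with $a\in J_1$, $c\in J_2$, applies the hypothesis to $\overline{a}$ to land in $(J_1\cap I^n)+(J_2\cap I^n)$, and then invokes the Aluffi torsion-freeness of \emph{both} $J_1$ and $J_2$ to conclude. You instead apply the translated hypothesis $(J_1+J_2)\cap(I^n+J_2)=J_1I^{n-1}+J_2$ directly to $x$, obtaining $x=u+v$ with $u\in J_1I^{n-1}$ already, so that only $v\in J_2\cap I^n=J_2I^{n-1}$ needs the hypothesis; hence your argument uses the torsion-freeness of $J_2$ alone in that direction. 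This shows the marginally stronger statement that if $J_2\subseteq I$ is Aluffi torsion-free and $\overline{J_1}\subseteq\overline{I}$ is Aluffi torsion-free in $R/J_2$, then $J_1+J_2\subseteq I$ is Aluffi torsion-free, without assuming $J_1\subseteq I$ is. In the forward direction the two proofs coincide up to your spelling out the modular-law step $(J_1+J_2)\cap(I^n+J_2)=\bigl((J_1+J_2)\cap I^n\bigr)+J_2$ that the paper uses implicitly.
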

\begin{proof}
Assume that $J_1+J_2\subseteq I$ is Aluffi torsion-free. For all $n\geq 1$ we have 
\[\overbar{J_1}\cap \overbar{I}^n=\frac{(J_1+J_2)\cap I^n+J_2}{J_2}=\frac{(J_1+J_2)I^{n-1}+J_2}{J_2}=\frac{J_1I^{n-1}+J_2}{J_2}=\overbar{J_1}\ \overbar{I}^{n-1}. \]
For the converse, let $J_1=(f_1,\ldots,f_t)$ and $y\in (J_1+J_2)\cap I^n$, then $y=a+c$ with $a\in J_1$ and $c\in J_2$. If $\overbar{a}=0$ we are done, if not we get $a\in \overbar{J_1}\cap \overbar{I}^n=\overbar{J_1}\overbar{I}^{n-1}$, then $a=\sum_{i=1}^{t}g_if_i+d$ with $g_i\in I^{n-1}$ and $d\in J_2$. It follows that $y=c+d+\sum_{i=1}^{t}g_if_i$, where  $\sum_{i=1}^{s}g_if_i\in J_1\cap I^n$ and $c+d\in J_2\cap I^n$. Hence $(J_1+J_2)\cap I^n\subseteq (J_1\cap I^n)+(J_2\cap I^n)$. Since $J_1,J_2\subseteq I$ are Aluffi torsion-free, one has
\[(J_1+J_2)\cap I^n\subseteq (J_1\cap I^n)+(J_2\cap I^n)=J_1I^{n-1}+J_2I^{n-1}=(J_1+J_2)I^{n-1}. \]

\end{proof}
\begin{Proposition}\label{residual_gens}
	Let $R$ be a  local ring, $J_1,J_2\subset R$ two ideals and $I=J_1+J_2$. Assume that $J_1$ is generated by elements not in $I^2$. The following are equivalent.
	\begin{enumerate}
		\item[{\rm (a)}] The pair $J_1\subset I$ is Aluffi torsion-free.
		\item[{\rm (b)}] $J_1\cap J_2^n=J_1I^{n-1}$ for all $n\geq 1$.
		\item[{\rm (c)}] $\gr_I{R}/J_1^* \simeq \gr_{I/J_1}{(R/J_1)}$.
	\end{enumerate} 
\end{Proposition}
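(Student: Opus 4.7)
The proof rests on the ideal identity
\[
J_1\cap I^n \;=\; (J_1\cap J_2^n) + J_1 I^{n-1}, \qquad (\ast)
\]
which I would derive first. From $I=J_1+J_2$, binomial expansion for ideals gives $I^n=(J_1+J_2)^n=J_2^n + J_1 I^{n-1}$, because every summand $J_1^{k}J_2^{n-k}$ with $k\ge 1$ lies in $J_1\cdot(J_1+J_2)^{n-1}=J_1 I^{n-1}$. Intersecting with $J_1$ and applying the modular law (valid since $J_1 I^{n-1}\subseteq J_1$) yields $(\ast)$.

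With $(\ast)$ in hand, the equivalence (a)$\Leftrightarrow$(b) becomes a direct rearrangement: the Aluffi torsion-free condition $J_1\cap I^n=J_1 I^{n-1}$ is equivalent to the ``new'' contribution $J_1\cap J_2^n$ being absorbed by $J_1 I^{n-1}$, which is the content of (b); and conversely any control of $J_1\cap J_2^n$ plugged back into $(\ast)$ gives (a). The hypothesis on the generators of $J_1$ plays no role in this step.

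For (a)$\Leftrightarrow$(c), I would appeal to Remark \ref{VV-Aluffi}(d). The assumption that $J_1$ is generated by elements $f_1,\ldots,f_t\notin I^2$ forces $\nu_I(f_i)=1$ (since $f_i\in J_1\subseteq I$), so (a) is equivalent to $\{f_1,\ldots,f_t\}$ being an $I$-standard base of $J_1$, i.e.\ $J_1^*=(f_1^*,\ldots,f_t^*)$ in $\gr_I R$. Separately, the surjection $R\twoheadrightarrow R/J_1$ induces a graded surjection $\gr_I R\twoheadrightarrow \gr_{I/J_1}(R/J_1)$ whose degree-$n$ kernel is $(J_1\cap I^n + I^{n+1})/I^{n+1}$; this coincides with the $n$-th graded piece of the form ideal $J_1^*$ precisely when the initial forms of the $f_i$ generate $J_1^*$, giving the isomorphism in (c). So (c) $\Leftrightarrow$ standard base $\Leftrightarrow$ (a).

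The main obstacle is the careful bookkeeping in (a)$\Leftrightarrow$(b): while $(\ast)$ immediately produces the absorption $J_1\cap J_2^n\subseteq J_1 I^{n-1}$ from (a), the reverse inclusion needed for the full equality in (b) must be extracted from the local-ring context together with the structural information that the generators lie in $I\setminus I^2$. The rest of the argument is formal manipulation of ideals and a single invocation of the Valabrega--Valla criterion.
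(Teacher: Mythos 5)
Your derivation of the identity $J_1\cap I^n=J_1I^{n-1}+(J_1\cap J_2^n)$ by expanding $(J_1+J_2)^n$ and applying the modular law is exactly the paper's argument for (a)$\Leftrightarrow$(b), and your remark that the hypothesis on the generators plays no role in this step is also how the paper proceeds. However, the ``reverse inclusion'' you defer to the end cannot be extracted from any amount of bookkeeping: the equality in (b) as printed is simply too strong. In $R=k[[x,y]]$ with $J_1=(x)$, $J_2=(y)$, $I=(x,y)$, condition (a) holds, yet $J_1\cap J_2^n=(xy^n)\neq x(x,y)^{n-1}=J_1I^{n-1}$. So (b) has to be read as the inclusion $J_1\cap J_2^n\subseteq J_1I^{n-1}$, and with that reading your identity $(\ast)$ already finishes the equivalence; the paper's own proof establishes nothing more. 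You were right to sense an obstacle --- the resolution is that the statement, not your argument, needs adjusting.

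For (a)$\Leftrightarrow$(c) your route is genuinely different from the paper's. The paper writes out both graded rings degree by degree, extracts from (c) the inclusion $J_1\cap I^n\subseteq I^{n+1}+J_1I^{n-1}$, iterates it to $J_1\cap I^{n}\subseteq J_1I^{n-1}+I^{n+m}$ for all $m$, and concludes by Krull intersection in the local ring; this is self-contained. You instead reduce (c) to the standard-base condition and invoke Remark \ref{VV-Aluffi}(d), outsourcing the hard direction to Valabrega--Valla; that is legitimate, and arguably cleaner, since the iteration argument is essentially the proof of that theorem. But one sentence of yours is backwards: the degree-$n$ kernel of $\gr_I(R)\twoheadrightarrow\gr_{I/J_1}(R/J_1)$ equals $(I^{n+1}+J_1\cap I^n)/I^{n+1}=(J_1^*)_n$ \emph{unconditionally} (the paper records this in Section~1), not ``precisely when the $f_i^*$ generate $J_1^*$.'' Hence (c) is vacuous unless $J_1^*$ there denotes the ideal $(f_1^*,\ldots,f_t^*)$ generated by the degree-one initial forms of the chosen generators --- which is how the paper's proof implicitly treats it. With that reading, (c) says $I^{n+1}+J_1I^{n-1}=I^{n+1}+J_1\cap I^n$ for all $n$, i.e.\ the standard-base condition, and your appeal to Remark \ref{VV-Aluffi}(d) then closes the loop.
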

\begin{proof}
	Let $n$ be a positive integer, we have
\begin{eqnarray}
\nonumber J_1\cap I^n   =  J_1\cap (J_1+J_2)^n  & =& J_1\cap (J_1^n, J_1^{n-1}J_2,\ldots, J_1J_2^{n-1},J_2^n)\\
\nonumber & = & J_1\cap (J_1^n, J_1^{n-1}J_2,\ldots, J_1J_2^{n-1} ) + J_1\cap J_2^n\\
\nonumber  & = & J_1(J_1+J_2)^{n-1}+ J_1\cap J_2^n\\
\nonumber   & = & J_1I^{n-1}+ J_1\cap J_2^n,
\end{eqnarray}
	which prove the equivalence of (a) and (b). 
	
	The associated graded ring $\gr_{I/J_1}{(R/J_1)}$ is isomorphic to 
	\[ R/I\oplus I/(I^2,J_1)\oplus I^2/(I^3+ I^2\cap J_1) \oplus\cdots\ . \]
	Since $J_1^*$ is generated by homogeneous elements in degree 1, $\gr_I{R}/J_1^*$ is isomorphic to 
	\[R/I\oplus I/(I^2,J_1)\oplus I^2/(I^3+IJ_1)\oplus\ldots\ . \]
Now	Assume that (c) holds. Hence for every $n\geq 1$
	\begin{equation}\label{gr}
	I^n/(I^{n+1}+J_1I^{n-1})\simeq I^n/(I^{n+1}+I^n\cap J_1).
	\end{equation}
	This shows that $J_1\cap I^n\subseteq I^{n+1}+J_1I^{n-1}$ for every $n\geq 1$. We have 
	\[ J_1\cap I^n\subseteq I^{n+1}\cap J_1+J_1I^{n+1}\subseteq I^{n+2}+J_1I^n+J_1I^{n-1}=I^{n+2}+J_1I^{n-1}.\]
By induction, for all $m\geq 1$
	\[J_1\cap I^{n}\subseteq J_1I^{n-1}+I^{n+m} .\]
Since $R$ is local, we obtain $J_1\cap I^n=J_1I^{n-1}$, which prove (a). The converse is clear by (\ref{gr}).  
\end{proof}

\begin{Theorem}\label{I-adic}
Let $J_1=(f_1,\ldots,f_m)$ and $J_2=(g_1,\ldots,g_m)\subset I$ be ideals in the  ring $R$ and let  $J_1\subset I$ be Aluffi torsion-free. Suppose that $f_i-g_i\in I^2$ for $i=1,\ldots, m$. Let $\mathcal{Z}_1$ and $\mathcal{Z}_2$ stand for  the first syzygies modules of $J_1$ and $J_2$ respectively. Then 
$J_2\subset I$ is Aluffi torsion-free if and only if $\mathcal{Z}_1\cap I^{n}R^m\subseteq \mathcal{Z}_2+ I^{n+1}R^{m}$ for all $n\geq 0$.
\end{Theorem}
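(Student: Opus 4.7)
The plan is to apply Theorem~\ref{Torsion-Syzygy} to both pairs simultaneously, using the fact that $f_i - g_i \in I^2$ forces the associated graded presentation maps attached to $J_1$ and $J_2$ to coincide. First I would use the Aluffi torsion-freeness of $J_1$ together with Remark~\ref{VV-Aluffi}(d) to arrange that $f_1,\ldots,f_m$ is an $I$-standard base of $J_1$ with $\nu_I(f_i)=1$ for every $i$; the congruence $f_i-g_i\in I^2$ then immediately gives $\nu_I(g_i)=1$ and $f_i^* = g_i^*$ in $I/I^2\subseteq \gr_I(R)$.

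Let $\mathfrak{f}_j:R^m\to R$ denote the two presentations $e_i\mapsto f_i$ and $e_i\mapsto g_i$, with kernels $\mathcal{Z}_1,\mathcal{Z}_2$, and write $\mathcal{Z}_i^*$ for the image of $\gr_I(\mathcal{Z}_i)$ in $\gr_I(R^m)$ under the embedding $\varphi$ of the preceding discussion, so that
\[
\mathcal{Z}_i^* \;=\; \bigoplus_{n\geq 0}\bigl(\mathcal{Z}_i\cap I^n R^m + I^{n+1}R^m\bigr)/I^{n+1}R^m.
\]
The stated inclusion $\mathcal{Z}_1\cap I^n R^m\subseteq \mathcal{Z}_2 + I^{n+1}R^m$ for all $n\geq 0$ is then a literal restatement of the graded containment $\mathcal{Z}_1^*\subseteq \mathcal{Z}_2^*$. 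The decisive observation is that $\gr(\mathfrak{f}_1)=\gr(\mathfrak{f}_2)$, since both send the basis vectors to the common elements $f_i^* = g_i^*$; in particular $\ker(\gr(\mathfrak{f}_1)) = \ker(\gr(\mathfrak{f}_2))$.

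Applying the equivalence $(a)\Leftrightarrow(c)$ of Theorem~\ref{Torsion-Syzygy} to $J_1$ yields $\mathcal{Z}_1^* = \ker(\gr(\mathfrak{f}_1)) = \ker(\gr(\mathfrak{f}_2))$. Combined with the automatic inclusion $\mathcal{Z}_2^*\subseteq \ker(\gr(\mathfrak{f}_2))$, the same equivalence applied to $J_2$ now shows that $J_2\subseteq I$ is Aluffi torsion-free if and only if $\mathcal{Z}_2^* = \ker(\gr(\mathfrak{f}_2)) = \mathcal{Z}_1^*$, equivalently $\mathcal{Z}_1^*\subseteq \mathcal{Z}_2^*$, which completes the proof. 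The main subtlety is that the one-sided inclusion $\mathcal{Z}_1^*\subseteq \mathcal{Z}_2^*$ a priori guarantees exactness of the complex $(*)$ for $J_2$ only at the middle node $\gr_I(R^m)$, whereas the definition of Aluffi torsion-freeness might appear to also require the identity $J_2^* = J_1^*$; the implication $(c)\Rightarrow(a)$ of Theorem~\ref{Torsion-Syzygy} is exactly what bridges this gap, so that the missing identity $J_2^* = J_1^*$ emerges as a consequence rather than an additional input.
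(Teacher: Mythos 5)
Your argument is correct in its essentials, but it takes a genuinely different route from the paper's. The paper proves the theorem by a direct element chase: for ($\Rightarrow$) it feeds a syzygy $(a_1,\ldots,a_m)\in \mathcal{Z}_1\cap I^nR^m$ into $\sum a_i(g_i-f_i)\in J_2\cap I^{n+2}=J_2I^{n+1}$, and for ($\Leftarrow$) it runs a double induction, repeatedly trading the $a_i$'s for representatives one $I$-adic order deeper, using alternately the Aluffi torsion-freeness of $J_1$ and the containment $\mathcal{Z}_1\cap I^nR^m\subseteq \mathcal{Z}_2+I^{n+1}R^m$. You instead run everything through Theorem \ref{Torsion-Syzygy}: the observation that $\gr(\mathfrak{f}_1)=\gr(\mathfrak{f}_2)$ because $f_i\equiv g_i$ modulo $I^2$, combined with the equivalence ``Aluffi torsion-free $\Leftrightarrow$ $\mathcal{Z}_j^*=\ker(\gr(\mathfrak{f}_j))$'' and the automatic inclusion $\mathcal{Z}_2^*\subseteq \ker(\gr(\mathfrak{f}_2))$, converts the one-sided containment $\mathcal{Z}_1^*\subseteq\mathcal{Z}_2^*$ into the required equality. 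This is more conceptual, proves the Corollary that follows the theorem at the same time, and makes transparent why only one inclusion need be assumed.

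Two points need repair. First, you cannot ``arrange'' $\nu_I(f_i)=1$ by invoking Remark \ref{VV-Aluffi}(d): that remark produces \emph{some} minimal generating set with that property, whereas the conclusion of Theorem \ref{I-adic} concerns the syzygy modules of the \emph{given} generating sets, and replacing the $f_i$ changes $\mathcal{Z}_1$ and destroys the pairing $f_i\leftrightarrow g_i$. Fortunately the normalization is unnecessary: with the filtration $\FF_nR^m=I^{n-1}R^m$ the map $\gr(\mathfrak{f}_j)$ sends $e_i$ to the class of $f_i$ (resp.\ $g_i$) in $I/I^2$, which may be zero when $f_i\in I^2$ but is in any case the same for both presentations, and neither direction of the equivalence in Theorem \ref{Torsion-Syzygy} actually uses $\nu_I(f_i)=1$; so $\gr(\mathfrak{f}_1)=\gr(\mathfrak{f}_2)$ and the rest of your argument survive with the given generators. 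Second, Theorem \ref{Torsion-Syzygy} is stated for a local ring while Theorem \ref{I-adic} is stated for an arbitrary Noetherian ring; you should either note that the proof of Theorem \ref{Torsion-Syzygy} does not use locality, or reduce to the local case via Proposition \ref{Extention of Rings}(d), checking that the syzygy modules and the stated containment commute with localization.
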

\begin{proof}
($\Rightarrow$) If $(a_1,\ldots,a_m)\in \mathcal{Z}_1\cap I^{n}R^m$ then $\sum_{i=1}^m a_if_i=0$ and $a_i\in I^{n}$.  Hence 
$\sum_{i=1}^ma_ig_i=\sum_{i=1}^ma_i(g_i-f_i)\in J_2\cap I^{n+2}=J_2I^{n+1}$ by assumption, then $\sum_{i=1}^mg_i(a_i-b_i)=0$, where
$b_i\in I^{n+1}$ and $(a_1,\ldots,a_n)=((a_1-b_1)+b_1,\ldots,(a_n-b_n)+b_n)\in \mathcal{Z}_2+I^{n+1}R^{m}$.
		
($\Leftarrow$) We only need to show that $J_2\cap I^{n+1}\subseteq J_2I^{n}$ for all $n\geq 0$. Make induction on $n$, the case $n=0$ is trivial.  Pick an element $\sum_{i=1}^m a_ig_i\in J_2\cap  I^{n+1}$. We may assume that $a_i\in I^{n-1}$ for $i=1,\ldots,m$, in fact we have  
$\sum_{i=1}^ma_ig_i\in J_2\cap I^{n+1}\subseteq J_2\cap I^{n}=J_2I^{n-1}$  by induction hypothesis, so that $\sum_{i=1}^ma_ig_i=\sum_{i=1}^ma_i'g_i$ with $a_i'\in I^{n-1}$. 

We may assume that $(a_1,\ldots,a_m)\in \mathcal{Z}_1\cap I^{n-1}R^m$. Namely we have $\sum_{i=1}^{m}a_i(f_i-g_i)\in I^{n-1}I^2=I^{n+1}$ hence $\sum_{i=1}^{m}a_if_i\in J_1\cap I^{n+1}=J_1I^n $ and $\sum_{i=1}^{m}a_if_i=\sum_{i=1}^{m}b_if_i$ with $b_i\in I^n$.  Now we see that $\sum_{i=1}^{m}(a_i-b_i)f_i=0$, so that we may replace $a_i$ with $(a_i-b_i)$ and this gives also $\sum_{i=1}^{m}a_if_i=0$ as required. We have now
\[\sum_{i=1}^{m}a_ig_i\in I^{n+1}\ ; \  (a_1,\ldots,a_m)\in \mathcal{Z}_1\cap I^{n-1}R^m. \]
Now by assumption, we have $(a_1,\ldots,a_m)\in \mathcal{Z}_2+I^nR^m$. Then there exists $b_i\in I^n $ and $(e_1,\ldots,e_m)\in \mathcal{Z}_2$ such that $(a_1,\ldots,a_m)=(e_1+b_1,\ldots,e_m+b_m)$. Then  $\sum_{i=1}^{m}a_ig_i=\sum_{i=1}^{m}b_ig_i$ and replacing the $a_i$ 's with $b_i$'s we may suppose that $a_i\in I^n$. Repeating the first  argument above  with $a_i\in I^n $ we get 
\[\sum_{i=1}^{m}a_ig_i\in I^{n+2}\ ; \  (a_1,\ldots,a_m)\in \mathcal{Z}_1\cap I^{n}R^m. \]
Therefore, we have an element $\sum_{i=1}^{m}a_ig_i$ such that $a_i\in I^n $  and it is clear that such element belong to $J_2I^{n}$. 
\end{proof}
\begin{Corollary}
Let $J_1,J_2\subseteq I$ be ideals in the  ring $R$ such that $J_1\equiv J_2$ modulo $I^2$ and $J_1\subseteq I$ is Aluffi torsion-free.  Then $J_2\subseteq I$ is Aluffi torsion-free if and only if the first syzygy modules of  $J_1,J_2$ have the same form  ideals in $\gr_{I}(R^m)$. 
\end{Corollary}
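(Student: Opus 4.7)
The plan is to reduce everything to Theorem \ref{I-adic} by translating the hypothesis \emph{equal form ideals} into the inclusion conditions that appear there. The setup is convenient: both $\mathcal{Z}_1$ and $\mathcal{Z}_2$ are submodules of $R^m$, filtered by $\{I^n R^m\}$, so by definition the form submodule in $\gr_I(R^m)$ has degree-$n$ component $(\mathcal{Z}_i \cap I^n R^m + I^{n+1} R^m)/I^{n+1} R^m$.

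The first step will be an elementary observation: the equality $\mathcal{Z}_1^* = \mathcal{Z}_2^*$ in $\gr_I(R^m)$ is equivalent to the pair of inclusions
\[
\mathcal{Z}_1 \cap I^n R^m \subseteq \mathcal{Z}_2 + I^{n+1} R^m \qquad \text{and} \qquad \mathcal{Z}_2 \cap I^n R^m \subseteq \mathcal{Z}_1 + I^{n+1} R^m
\]
holding for every $n \geq 0$. The nontrivial direction is a short diagram chase: starting from the first inclusion and picking $z \in \mathcal{Z}_1 \cap I^n R^m$ written as $z = z_2 + w$ with $z_2 \in \mathcal{Z}_2$ and $w \in I^{n+1} R^m$, the identity $z_2 = z - w$ forces $z_2 \in \mathcal{Z}_2 \cap I^n R^m$, which is exactly what is needed to conclude $\mathcal{Z}_1 \cap I^n R^m + I^{n+1} R^m \subseteq \mathcal{Z}_2 \cap I^n R^m + I^{n+1} R^m$; interchanging the roles of $J_1$ and $J_2$ then gives the reverse inclusion.

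With this translation in hand, the forward direction is immediate. Assuming $J_2 \subseteq I$ is Aluffi torsion-free, Theorem \ref{I-adic} applied to the pair $(J_1,J_2)$ yields the first inclusion, and applied with $J_1$ and $J_2$ interchanged (permissible because the hypothesis $f_i - g_i \in I^2$ is symmetric and both ideals are now assumed Aluffi torsion-free) yields the second. For the backward direction, equality of the form submodules already delivers the single inclusion $\mathcal{Z}_1 \cap I^n R^m \subseteq \mathcal{Z}_2 + I^{n+1} R^m$ for every $n$, and Theorem \ref{I-adic} converts this directly into the Aluffi torsion-freeness of $J_2 \subseteq I$. I do not expect any real obstacle: the entire argument is a formal consequence of Theorem \ref{I-adic} together with the elementary lemma identifying ``equal form submodules'' with the two symmetric inclusions, so the only delicate point is ensuring that the swap of $J_1$ and $J_2$ is carried out under the correct hypotheses, which the Aluffi torsion-freeness of both ideals in the forward direction makes legitimate.
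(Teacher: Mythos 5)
Your proposal is correct and follows essentially the same route as the paper: translating equality of the form modules into the two symmetric inclusions $\mathcal{Z}_i\cap I^nR^m\subseteq\mathcal{Z}_j+I^{n+1}R^m$ via the identification of the degree-$n$ component of $(\mathcal{Z}_i)^*$ with $(\mathcal{Z}_i\cap I^nR^m+I^{n+1}R^m)/I^{n+1}R^m$, and then invoking Theorem \ref{I-adic} once directly and once with the roles of $J_1$ and $J_2$ interchanged (legitimate in the forward direction since both pairs are then Aluffi torsion-free). This matches the paper's proof, which likewise rests on the symmetry of Theorem \ref{I-adic} together with the same elementary equivalence.
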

\begin{proof}
The proof is based on the symmetry of the Theorem (\ref{I-adic}) and  the fact that  the condition $\mathcal{Z}_1\cap I^{n}R^m\subseteq \mathcal{Z}_2+ I^{n+1}R^{m}$  is equivalent with  $(\mathcal{Z}_1)^*\subseteq (\mathcal{Z}_2)^*$ in $\gr_{I}(R^m)=\bigoplus_{n\geq 0}I^nR^m/I^{n+1}R^m$. More precisely, if $a\in \mathcal{Z}_1\cap I^{n}R^m\setminus I^{n+1}R^{m}$ for some $n$ then $a\in \mathcal{Z}_2+ I^{n+1}R^{m} $ hence $a=b+c$ with $b\in \mathcal{Z}_2 $ and $c\in I^{n+1}R^{m} $ thus $b^*=a^*$ which proves that $(\mathcal{Z}_1)^*\subseteq (\mathcal{Z}_2)^*$. Conversely,  if $a\in \mathcal{Z}_1\cap I^{n}R^m $, we choose an element $b\in  \mathcal{Z}_2 $ such that $a^*=b^*$ hence $a-b\in I^{n+1}R^{m}$ and we are done.
\end{proof}	
\begin{Proposition}
	Let $ J_{1}\subseteq J_{2} \subseteq I$ be  ideals in the ring $R$. Assume that   $ J_{1}\cap J_{2}^{n-1}=J_1J_2^{n-1}$  and  $ I^{n}\subseteq J_{2}^{n}+J_{1}$ for all $n\geq 1$. 
	Then $ J_{1}\subseteq I$ is Aluffi torsion-free. 
\end{Proposition}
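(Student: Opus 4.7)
The plan is to establish the nontrivial inclusion $J_1\cap I^n\subseteq J_1I^{n-1}$ directly for every $n\geq 1$, since the reverse containment is automatic. The strategy is to first sharpen the hypothesis $I^n\subseteq J_2^n+J_1$ into a form whose residual piece already lies in $J_1I^{n-1}$, and then close the argument with a single decomposition.

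The key refinement comes from specializing the second hypothesis at $n=1$, which reads $I\subseteq J_2+J_1$. Raising to the $n$-th power and expanding $(J_2+J_1)^n$ into its $2^n$ monomial summands, each summand is either $J_2^n$ itself or contains at least one factor of $J_1$; in the latter case, factoring out one copy of $J_1$ and bounding the remaining $n-1$ factors by $I$ (using $J_1,J_2\subseteq I$) places the summand inside $J_1I^{n-1}$. This yields the strengthened inclusion
\[ I^n\subseteq J_2^n+J_1I^{n-1}. \]

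With this refinement in hand, take any $x\in J_1\cap I^n$ and write $x=a+b$ with $a\in J_2^n$ and $b\in J_1I^{n-1}$. Since both $x$ and $b$ lie in $J_1$, so does $a=x-b$, giving $a\in J_1\cap J_2^n$. The first hypothesis identifies this intersection with $J_1J_2^{n-1}$, which in turn sits inside $J_1I^{n-1}$. Hence $x=a+b\in J_1I^{n-1}$, completing the proof.

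The main obstacle is recognizing that the bare hypothesis $I^n\subseteq J_2^n+J_1$ must be sharpened to $I^n\subseteq J_2^n+J_1I^{n-1}$ before the decomposition closes; otherwise the residual term $b$ would only be controlled up to $J_1$ and one would be forced into an iterative reduction with no visible termination. The binomial expansion of $(J_2+J_1)^n$ extracts exactly the missing factor of $I^{n-1}$ by exploiting only the $n=1$ case of the hypothesis.
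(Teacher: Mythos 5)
Your proof is correct, and it takes a genuinely shorter route than the paper's. The paper proves, by induction on $n$, the decomposition $I^n\subseteq J_2^n+\sum_{j=0}^{n-1}I^{j}(IJ_2^{n-j-1}\cap J_1)$, invoking the hypothesis $I^{n}\subseteq J_{2}^{n}+J_{1}$ at every stage of the induction, and then must apply the hypothesis $J_1\cap J_2^{m}=J_1J_2^{m}$ separately to each residual summand to absorb it into $J_1I^{n-1}$. You short-circuit all of this: from the single inclusion $I\subseteq J_1+J_2$ (the case $n=1$ of the second hypothesis), the expansion $(J_1+J_2)^n=J_2^n+J_1(J_1+J_2)^{n-1}\subseteq J_2^n+J_1I^{n-1}$ produces the missing factor of $I^{n-1}$ at once, and then a single use of the first hypothesis on $a=x-b\in J_1\cap J_2^{n}$ finishes the argument. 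A pleasant by-product of your version is that it exposes the second hypothesis as redundant for $n\geq 2$, since $I^n\subseteq(J_1+J_2)^n\subseteq J_2^n+J_1$ already follows from the case $n=1$; it also makes clear that the assumption $J_1\subseteq J_2$ is never used. One cosmetic point: the first hypothesis, read at index $n+1$, gives $J_1\cap J_2^{n}=J_1J_2^{n}$ rather than $J_1J_2^{n-1}$ (alternatively $J_1\cap J_2^{n}\subseteq J_1\cap J_2^{n-1}=J_1J_2^{n-1}$); either way the containment in $J_1I^{n-1}$ that you need is immediate, so nothing is affected.
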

\begin{proof}
	We show by induction on $n$ that 
	\begin{equation}\label{H1}
	I^n\subseteq J_2^n+\sum_{j=0}^{n-1}I^{j}(IJ_2^{n-j-1}\cap J_1),  \quad \hbox{for all}\  n\geq 1.
	\end{equation}
	The case  $n=1$ is clear  by second assumption. Suppose that (\ref{H1}) holds for some $n$. Multiplying (\ref*{H1}) by $I$ yields $I^{n+1}\subseteq IJ_2^n+\sum_{j=0}^{n-1}I^{j+1}(IJ_2^{n-j-1}\cap J_1)$. Again by second assumption we also have that $I^{n+1}\subseteq J_2^{n+1}+J_1$, so that $I^{n+1}$ contained in 
	\[\left[ IJ_2^n+\sum_{j=0}^{n-1}I^{j+1}(IJ_2^{n-j-1}\cap J_1)  \right]\bigcap \ (J_2^{n+1}+J_1).\]
	Let $a$ be an element of $I^{n+1}$. Write $a=b+c=d+e$ where 
	\[b\in IJ_2^n\ \  , \ \ c\in \sum_{j=0}^{n-1}I^{j+1}(IJ_2^{n-j-1}\cap J_1)\ \ , \ \ d\in J_2^{n+1}\subseteq IJ_2^{n}\ \ ,\ \ e\in J_1. \]
	Then $e-c=d-b$, and so $d-b\in  J_1\cap IJ_2^{n}$. Therefore, $a=d+e=d+c+(e-c)$ is in
	\[J_2^{n+1} + \sum_{j=0}^{n-1}I^{j+1}(IJ_2^{n-j-1}\cap J_1)+ (J_1\cap IJ_2^{n})= J_2^{n+1} + \sum_{j=0}^{n}I^{j}(IJ_2^{n-j}\cap J_1), \] 
	which proves (\ref*{H1}). Now  using (\ref*{H1}) we obtain that 
	\begin{eqnarray}
	\nonumber J_1 \cap I^n &\subseteq &(J_1\cap J_2^n)+(J_1\cap IJ_2^{n-1})+ \sum_{j=1}^{n}I^{j}(IJ_2^{n-j}\cap J_1)\\
	\nonumber  &\subseteq & (J_1\cap J_2^{n-1}) + \sum_{j=1}^{n}I^{j}(IJ_2^{n-j}\cap J_1)\\
	\nonumber &\subseteq & J_1J_2^{n-1}+ \sum_{j=1}^{n}I^{j}(J_2^{n-j}\cap J_1) \subseteq J_1J_2^{n-1}+ \sum_{j=1}^{n}I^{j}(J_1J_2^{n-j})\\
	\nonumber &\subseteq & J_1I^{n-1}+ \sum_{j=1}^{n}I^{j}(J_1I^{n-j})= J_1I^{n-1}
	\end{eqnarray}
	
\end{proof}

\subsection{Strongly Aluffi torsion-free ideals}
Let $R$ be a local ring and $I=(f_1,\ldots,f_t)$ an ideal such that $f_1,\ldots,f_t$ is  a regular sequence. Then  for any  $n\geq 1$ the pair  $J_i=(f_1^n,\ldots,f_i^n)\subset I^n $ is Aluffi torsion-free for $i=1,\ldots,t$ \cite[Example 1.3]{AR}.  We have the following definition. 
\begin{Definition}
The pair $J=(f_1,\ldots,f_t)\subseteq I$ is called strongly Aluffi torsion-free if $J_i=(f_1,\ldots,f_i)\subseteq I$ is Aluffi torsion-free for $i=1,\ldots,t$. 
\end{Definition}
In general, the following example shows that  Aluffi torsion-free property  does not implies strongly Aluffi torsion-free property.
\begin{Example}
Let $J\subseteq k[x,y,z]$ be an ideal of $5$ projective points in general linear position in $\pp_k^2$ which are columns of the matrix 
\[\begin{bmatrix}
1 & 0 & 0 & 1 & -1 \\
0 & 1 & 0 & 1 & 2 \\
0 & 0 & 1 & 1 & 1 \\
\end{bmatrix}.\]
Then $J=(xy+3xz-4yz, zx^2-2yz^2+xz^2, zy^2+6xz^2-7yz^2)$ which is codimension $2$ perfect ideal.  Let $I$ stands for the Jacobian ideal $I=(J,I_2(\Theta))$ where $\Theta$ is the Jacobian matrix of $J$ and $I_2(\Theta)$ is the ideal generated by $2$-minors of $\Theta$. A calculation in ( \cite{singular}) shows that $J\subseteq I$ is Aluffi torsion-free but is not strongly Aluffi torsion-free.  
\end{Example} 
The proposition below gives a criterion for strongly Aluffi torsion-free ideals. 
\begin{Proposition}\label{regular}
Let $J=(f_1,\ldots,f_t)\subseteq I$ be Aluffi torsion-free ideals in the ring $R$. If  $\left(J_{t-s}:_R f_{t-s+1}\right)=J_{t-s}$  for  $1\leq s\leq t-1$,   then  $J\subseteq I$ is strongly Aluffi torsion-free.  
\end{Proposition}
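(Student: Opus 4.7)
The plan is to prove the proposition by a descending induction on $i$, starting from $i=t$ (where $J_t = J$ is Aluffi torsion-free by hypothesis) down to $i=1$. For the descending step, assume $J_{i+1}\subseteq I$ is Aluffi torsion-free and use the colon condition $(J_i :_R f_{i+1}) = J_i$, which the hypothesis on $s=t-i$ provides for $i=1,\ldots, t-1$, to conclude that $J_i \subseteq I$ is Aluffi torsion-free.

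Inside the descending step I would run a secondary ascending induction on $n$ to establish $J_i\cap I^n = J_i I^{n-1}$ for every $n\geq 1$. The base case $n=1$ is immediate from $J_i\subseteq I$. For the inductive step, pick $a\in J_i\cap I^n$. Since $J_i\subseteq J_{i+1}$, the outer inductive hypothesis gives $a\in J_{i+1}\cap I^n = J_{i+1}I^{n-1}$, so one can write
\[
a \;=\; b + c\,f_{i+1}, \qquad b\in J_i I^{n-1},\ c\in I^{n-1}.
\]
Because $a - b \in J_i$, the element $c\,f_{i+1}$ lies in $J_i$, and the colon hypothesis forces $c\in J_i$. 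Thus $c \in J_i\cap I^{n-1} = J_i I^{n-2}$ by the inner inductive hypothesis, and multiplying by $f_{i+1}\in I$ places $cf_{i+1}\in J_i I^{n-1}$. Combined with $b\in J_i I^{n-1}$, this yields $a\in J_i I^{n-1}$, closing both inductions.

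No substantial obstacle arises; the argument is a clean double induction. The conceptual point worth underlining is that the Aluffi torsion-freeness of the larger pair $J_{i+1}\subseteq I$ is precisely what makes the colon condition effective at the descending step: it supplies the decomposition $a = b + cf_{i+1}$ with $b\in J_i I^{n-1}$, isolating the coefficient $c$ on which the colon condition $(J_i:f_{i+1})=J_i$ can act. Without Aluffi torsion-freeness one step up, such a clean splitting of $a$ would be unavailable and the colon condition alone would be insufficient.
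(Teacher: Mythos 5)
Your proof is correct and follows essentially the same route as the paper's: use Aluffi torsion-freeness of the larger ideal to write $a=b+cf_{i+1}$ with $b\in J_iI^{n-1}$ and $c\in I^{n-1}$, apply the colon condition to get $c\in J_i$, and close with an inner induction on $n$ (the paper states only the step from $J_t$ to $J_{t-1}$ and leaves the descending iteration implicit). If anything, your formulation is slightly cleaner, since by subtracting $b\in J_iI^{n-1}$ as a single element you avoid the paper's unneeded assumption that the coefficients $a_i$ of $a$ in $J_{t-1}$ lie in $I^n$.
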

\begin{proof}
It is enough to show that $  J_{t-1}\cap I^{n}= J_{t-1}I^{n-1}$ for any $ n \geq 1$. Let $a$ be an element of $J_{t-1} \cap I^{n} $. Since $ J \subseteq I$ is Aluffi torsion-free and $ J_{t-1}\cap I^{n} \subseteq J\cap  I^{n}$ hence $ a \in JI^{n-1}$. Write $ a=\sum_{i=1}^{t-1}a_{i}f_{i}=\sum_{i=1}^{t}b_{i}f_{i}$ with $a_i\in I^n$ and $ b_{i}\in I^{n-1}$. One has  $$ b_{t}f_{t}=(a_{1}-b_{1})f_{1}+ \ldots + (a_{t-1}-b_{t-1})f_{t-1} \subseteq J_{t-1},$$
Thus $b_r\in (J_{t-1}:f_t)$. We get
$$a=(a_{1}-b_{1})f_{1}+\ldots+(a_{t-1}-b_{t-1})f_{t-1}+b_{t}f_{t} \in J_{t-1}I^{n-1}+f_{t}((J_{t-1}:f_t)\cap I^{n-1}).$$ 
Then  for all $n\geq 1$ we get $$J_{t-1}\cap I^n\subseteq  J_{t-1}I^{n-1}+f_{t}((J_{t-1}:f_t)\cap I^{n-1}). $$
Now by assumption we have 
\[J_{t-1}\cap I^n\subseteq  J_{t-1}I^{n-1}+f_{t}(J_{t-1}\cap I^{n-1}).\]
Making  induction on $n$ we get
\[J_{t-1}\cap I^n\subseteq J_{t-1}I^{n-1}+f_{t}(J_{t-1} I^{n-2})= J_{t-1}I^{n-1},\]
as required. 
\end{proof}
\begin{Remark}\rm
	Let $ J=( f_1,\ldots,f_t) \subseteq R$ be an ideal such that $f_1,\ldots,f_t$ is  a regular sequence. If  $J\subseteq I$ is Aluffi torsion-free then by  Proposition (\ref{regular})  it is strongly Aluffi torsion-free. Also by the proof of Proposition (\ref{regular}), if for all $n\geq 1$ and $1\leq s \leq t-1$ we have $$f_{t-s+1}((J_{t-s}:f_{t-s+1})\cap I^{n})\subseteq J_{t-s}I^{n},$$ then  strongly Aluffi torsion-free property holds. 
\end{Remark}
\begin{Example}
Let $R=k[x_1,\ldots,x_n]$ and $J=(x_ix_j\ : \ 1\leq i<j\leq n)$. By  \cite[Proposition 2.1]{AR}, $J\subseteq I=(J,x_1^{n-1},\ldots, x_n^{n-1})$ is Aluffi torsion-free. Note that the number of generators of $J$ is $t=n(n-1)/2$. We show that $J\subseteq I$ is strongly Aluffi torsion-free. By above remark and symmetry we just  prove that $x_{n-1}x_n((J_{t-1}:_R (x_{n-1}x_n)\cap I^{m} )$ contained in $J_{t-1}I^{m}$ for all $m\geq 1$. An easy calculation show that $Q:=(J_{r-1}:_R (x_{n-1}x_n))=(x_1,\ldots,x_{n-2})$. Setting $\Delta=(\hat{J},x_1^{n-1},\ldots,x_{n-2}^{n-1})$ and $\Gamma=(x_{n-1}x_n,x_{n-1}^{n-1},x_n^{n-1})$, where by $\hat{J}$ we mean $J$ without the generator $x_{n-1}x_n$.  Write $I=(\Gamma, \Delta)$. We have
\begin{eqnarray}
\nonumber x_{n-1}x_n(Q\cap I^{m})&=&x_{n-1}x_n\left[Q\cap (\Gamma^{m}, \Gamma^{m-1}\Delta,\ldots,\Gamma\Delta^{m-1},\Delta^{m})\right]\\
\nonumber &=& x_{n-1}x_n\Delta(I^{m-1})+(x_{n-1}x_n)Q\Gamma^{m}\subseteq J_{t-1}I^m.
\end{eqnarray}
\end{Example}
\begin{Theorem}
Let $J_1,J_2\subseteq I$ be ideals in the ring $R$. Assume that $\overbar{J_2}\subseteq \overbar{I}$ is Aluffi torsion-fee in $\overbar{R}=R/J_1$. If there exists a minimal generators  $f_1,\ldots,f_s$ of $J_1$ such that 
\begin{enumerate}
	\item $J_1=(f_1,\ldots,f_s)\subseteq I$ is strongly Aluffi torsion-free.
	\item $\{\widetilde{f_1},\ldots, \widetilde{f_s}\}$ is a regular sequence in $\widetilde{R}=R/J_2$.
\end{enumerate}  
Then $J_2\subseteq I$ is Aluffi torsion-free.   
\end{Theorem}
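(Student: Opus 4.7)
My plan is to prove the theorem by induction on $s$, the number of generators of $J_1$. The central observation is that once the base case $s=1$ is established, the inductive step can be handled by first quotienting out $J_{s-1}=(f_1,\ldots,f_{s-1})$ to reduce to a principal situation, then peeling off the remaining generators via the inductive hypothesis. Throughout, only the inclusion $J_2\cap I^n\subseteq J_2 I^{n-1}$ needs to be shown, the reverse being automatic from $J_2\subseteq I$.

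For the base case $s=1$, with $J_1=(f_1)$, I would do a nested induction on $n$; the case $n=1$ is trivial. Given $x\in J_2\cap I^n$, the hypothesis that $\overline{J_2}\subseteq\overline{I}$ is Aluffi torsion-free in $R/(f_1)$ gives the inclusion $J_2\cap I^n\subseteq J_2 I^{n-1}+(f_1)$, so I can write $x=x'+z$ with $x'\in J_2 I^{n-1}$ and $z=x-x'\in (f_1)\cap J_2\cap I^n$. The Aluffi torsion-free property of $(f_1)\subseteq I$ writes $z=c_1 f_1$ with $c_1\in I^{n-1}$. Because $\widetilde{f_1}$ is a non-zero-divisor on $R/J_2$, the relation $c_1 f_1\in J_2$ forces $c_1\in J_2$, and by the inner inductive hypothesis $c_1\in J_2\cap I^{n-1}=J_2 I^{n-2}$; since $f_1\in I$, this yields $z\in J_2 I^{n-2}\cdot I=J_2 I^{n-1}$, as required.

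For the inductive step, suppose the result holds whenever $J_1$ has fewer than $s$ generators, and consider the quotient $\widehat{R}=R/J_{s-1}$. I would apply the already-proved case $s=1$ inside $\widehat{R}$ to the principal ideal $(\widehat{f_s})$, the image $\widehat{J_2}=(J_2+J_{s-1})/J_{s-1}$, and the image $\widehat{I}=(I+J_{s-1})/J_{s-1}$. The three hypotheses needed are: (i) $(\widehat{f_s})\subseteq \widehat{I}$ is Aluffi torsion-free in $\widehat{R}$; (ii) $\overline{\widehat{J_2}}\subseteq\overline{\widehat{I}}$ is Aluffi torsion-free in $\widehat{R}/(\widehat{f_s})=R/J_1$, which is the given hypothesis on $\overline{J_2}$; and (iii) $\widetilde{\widehat{f_s}}$ is a non-zero-divisor on $\widehat{R}/\widehat{J_2}=R/(J_2+J_{s-1})$, which is part of the given regular-sequence condition. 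For (i), any $a\in J_1\cap(I^n+J_{s-1})$ decomposes as $a=b+c$ with $b\in I^n$ and $c\in J_{s-1}$; then $b=a-c\in J_1\cap I^n=J_1 I^{n-1}$ by the Aluffi torsion-free property of $J_1\subseteq I$, and writing $b=\sum_{i=1}^s d_i f_i$ with $d_i\in I^{n-1}$ shows $a-d_s f_s\in J_{s-1}$, giving $a\in f_s I^{n-1}+J_{s-1}$. The base case then yields that $\overline{J_2}\subseteq\overline{I}$ is Aluffi torsion-free in $R/J_{s-1}$. Finally, the inductive hypothesis applied to $J_{s-1}\subseteq I$ (strongly Aluffi torsion-free by restriction of the hypothesis on $J_1$), together with this intermediate conclusion and the truncated regular sequence $\widetilde{f_1},\ldots,\widetilde{f_{s-1}}$ on $R/J_2$, produces the desired conclusion that $J_2\subseteq I$ is Aluffi torsion-free.

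The main technical obstacle is the base case $s=1$, which is where all three hypotheses on $J_1$, $\overline{J_2}$, and the regular sequence first interact simultaneously and must be orchestrated against each other; in the inductive step, verifying item (i) is essentially a direct unpacking of the definitions using the Aluffi torsion-free property of $J_1$, and the remaining work is organizational.
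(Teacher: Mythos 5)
Your proof is correct and follows essentially the same route as the paper's: induction on $s$, with the base case obtained by splitting $J_2\cap I^n$ into $J_2I^{n-1}$ plus $(f_1)\cap J_2\cap I^n$, using $(f_1)\cap I^n=f_1I^{n-1}$ together with the non-zero-divisor hypothesis (which the paper phrases as $(J_2:f_1)=J_2$) and an inner induction on $n$, and with the inductive step passing to $R/(f_1,\ldots,f_{s-1})$ exactly as the paper does. The only cosmetic difference is that you verify directly that $(\widehat{f_s})\subseteq \widehat{I}$ is Aluffi torsion-free in the quotient, where the paper instead cites its proposition on sums of Aluffi torsion-free ideals.
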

\begin{proof}
 We use induction on $ s$. Assume that $s=1$. Since $ \overbar{J_{2}} \subseteq \overbar{I}$ is Aluffi torsion-free then for all $n\geq 1$  we have
 $(J_2\cap I^n)+J_1=(J_2I^{n-1})+J_1$. Intersecting the latter with $J_2\cap I^n$ we get
 \[J_{2}\cap I^{n}=J_{2}I^{n-1}+(J_{1}\cap I^{n}\cap J_{2})\]
 But $ J_{1}\cap I^{n}\cap J_{2}=(f_1)\cap I^{n}\cap J_{2}$. Hence  by (1) we obtain that \[ (f_1)\cap I^{n}\cap J_{2}=(f_1)I^{n-1}\cap J_{2}=(f_1)(I^{n-1}\cap (J_{2}:f_1)).\]
 By (2) $ \tilde{f_1}$ is regular in $ \tilde{R}$, then $ (J_{2}:f_1)=J_{2}$. Hence \[ (f_1)\cap I^{n}\cap J_{2}=f_1(I^{n-1}\cap J_{2}),\]
 and we obtain 
\[J_{2}\cap I^{n}=J_{2}I^{n-1}+ f_1(I^{n-1}\cap J_{2})\]
Now making induction on $n$, we get 
\[J_{2}\cap I^{n}=J_{2}I^{n-1}+ f_1(J_2I^{n-2})\subseteq J_2I^{n-1}. \]
which prove the assertion in this case. Now assume that $ s>1$. Let $ N=(f_{1},...,f_{s-1})$ and  denote by $"\ \hat{}\ "$ reduction modulo $N$. Then in the ring $ \hat{R}$ we have ideals $ \hat{J_1}, \hat{J_2}$ and $\hat{I}$. Furthermore, $\overbar{\hat{J_2}}\subseteq \overbar{\hat{I}}\subseteq \overbar{\hat{R}}$  and  by the minimality of $\{f_1,\ldots,f_s\}$ and Proposition (\ref{sum of two ATF}),   $\hat{J_1}=(\hat{f_s})\subseteq \hat{I}$ is  Aluffi torsion-free. Also $ \hat{f_{s}}$ is regular in $\hat{R}$. Thus by the first step of the induction we get that $\hat{J_2}\subseteq \hat{I}$ is Aluffi torsion-free. Since the ideal $N$ has the same property as the ideal $ J_{1}$ then the inductive assumption complete the proof.
\end{proof}
\section{Application and  examples}
In intersection theory Aluffi algebra is used  for closed embedding of schemes  $Y\hookrightarrow X\hookrightarrow M$ where $M$ is a regular  and $Y$ is the singular subscheme of $X$. In this section we follow this direction. 

Let $R=k[x_0,\ldots,x_n]$ be a polynomial ring over a field $k$ of characteristic zero. Let $J=(f_1,\ldots,f_t)$ be an ideal of height $r$. Denote by $\Theta=(\partial f_{ij}/\partial x_j) $ the Jacobian matrix of $J$ and by $I_r(\Theta)$ the ideals generated by $r$-minors of $\Theta$. The  ideal $I=(J,I_r(\Theta))$ is called the Jacobian ideal of $J$ which describes the singular subscheme of  $\spec{R/J}$.  See (\cite{AR}) and (\cite{ASR}) for examples of Aluffi torsion-free ideals in this situation.  
\begin{Example}\label{affine_curves}\rm
Let $J\subset R=k[x,y,z]$ be the defining ideal of the monomial
space curve with parametric equations $x=u^{n_1},\ y=u^{n_2},\
z=u^{n_3}$, where  $\gcd (n_1,n_2,n_3)=1$. Suppose that
$n_1=2q+1,\ n_2=2q+p+1,\ n_3=2q+2p+1$, for non-negative integers $p,q$.
If $I$ is the Jacobian ideal of $J$ then pair $J\subset I$ is  Aluffi torsion-free.  
\end{Example}
\begin{proof}
Grading $R$ by the exponents of the parameter $u$ in the parametric equations, one knows (\cite {Herzog})  that $J$ is
a perfect codimension $2$ ideal generated by
the homogeneous polynomials $$F_1=x^{c_1}-y^{r_{12}}z^{r_{13}},\
F_2=x^{r_{21}}z^{r_{23}}-y^{c_2},\
F_3=x^{r_{31}}y^{r_{32}}-z^{c_3}$$ where $0<r_{ij}< c_i\ (i=1,2,3,
j\neq i)$. Note the relations
$$c_1=r_{21}+r_{31},\ c_2=r_{12}+r_{32},\  c_3=r_{13}+r_{23}.$$
The Jacobian matrix of $J$ is
$$\Theta=\left(
\begin{array}{ccc}
c_1x^{c_1-1} & -r_{12}y^{r_{12}-1}z^{r_{13}} & -r_{13}y^{r_{12}}z^{r_{13}-1} \\
r_{21}x^{r_{21}-1}z^{r_{23}} & -c_2y^{c_2-1} & r_{23}x^{r_{21}}z^{r_{23}-1} \\
r_{31}x^{r_{31}-1}y^{r_{32}} & r_{32}x^{r_{31}}y^{r_{32}-1} & -c_3z^{c_3-1} \\
\end{array}
\right)
$$
The $2$-minors of $\Theta$ are
\begin{eqnarray}
\nonumber  f_1 &=& -c_1c_2x^{c_1-1}y^{c_2-1}+r_{21}r_{12}x^{r_{21}-1}y^{r_{12}-1}z^{c_3} \\
\nonumber  f_2 &=& c_1r_{23} x^{c_1+r_{21}-1}z^{r_{23}-1}+r_{13}r_{21}x^{r_{21}-1}y^{r_{12}}z^{c_3-1}  \\
\nonumber  f_3 &=& -r_{12}r_{23}x^{r_{21}}y^{r_{12}-1}z^{c_3-1}+c_2r_{13}y^{c_2+r_{12}-1}z^{r_{13}-1} \\
\nonumber  f_4 &=& c_1r_{32}x^{c_1+r_{31}-1}y^{r_{32}-1}+r_{31}r_{12}x^{r_{31}-1}y^{c_2-1}z^{r_{13}} \\
\nonumber  f_5 &=& -c_1c_3x^{c_1-1}z^{c_3-1}+ r_{31}r_{13}x^{r_{31}-1}y^{c_2}z^{r_{13}-1}\\
\nonumber  f_6 &=&  r_{32}r_{13}x^{r_{31}}y^{c_2-1}z^{r_{13}-1}+ c_3r_{12}y^{r_{12}-1}z^{c_3+r_{13}-1} \\
\nonumber  f_7 &=& r_{21}r_{32}x^{c_1-1}y^{r_{32}-1}z^{r_{23}}+c_2r_{31}x^{r_{31}-1}y^{c_2+r_{32}-1} \\
\nonumber  f_8 &=& -r_{21}r_{31}x^{c_1-1}y^{r_{32}}z^{r_{23}-1}-c_3r_{21}x^{r_{21}-1}z^{c_3+r_{23}-1} \\
\nonumber  f_9 &=& -r_{32}r_{23}x^{c_1}y^{r_{32}-1}z^{r_{23}-1}+c_2c_3y^{c_2-1}z^{c_3-1}
\end{eqnarray}
Write $\mathfrak{D}$ for the ideal generated by the following monomials
$$
\begin{array}{ccc}
M_1=x^{r_{21}-1}y^{r_{12}-1}z^{c_3} &\quad  M_2= x^{r_{21}-1}y^{r_{12}}z^{c_3-1}&\quad \ \  M_3=y^{c_2+r_{12}-1}z^{r_{13}-1}\\
M_4=x^{r_{31}-1}y^{c_2-1}z^{r_{13}}&\quad  M_5= x^{r_{31}-1}y^{c_2}z^{r_{13}-1}&\quad\ \   M_6=y^{r_{12}-1}z^{c_3+r_{13}-1}\\
M_7=x^{r_{31}-1}y^{c_2+r_{32}-1} & \quad M_8=x^{r_{21}-1}z^{c_3+r_{23}-1} &M_9=y^{c_2-1}z^{c_3-1}
\end{array}
$$
The following relations come out
\begin{eqnarray}
\nonumber  f_1 &=& -c_1c_2x^{r_{21}-1}y^{r_{12}-1}F_3+(r_{21}r_{12}-c_1c_2)M_1  \\
\nonumber  f_2 &=&  c_1r_{23}x^{r_{21}-1}z^{r_{23}-1}F_1+(r_{31}r_{21}-c_1r_{23})M_2  \\
\nonumber  f_3 &=&  -r_{12}r_{23}y^{r_{12}-1}z^{r_{13}-1}F_2-(r_{12}r_{23}+c_2r_{13})M_3 \\
\nonumber  f_4 &=&  c_1r_{32}x^{r_{31}-1}y^{r_{32}-1}F_1+(r_{31}r_{12}+c_1r_{32})M_4  \\
\nonumber  f_5 &=&  -c_1c_3x^{r_{31}-1}z^{r_{13}-1}F_2+(r_{31}r_{13}-c_1c_3)M_5  \\
\nonumber  f_6 &=&  r_{32}r_{13}y^{r_{12}-1}z^{r_{13}-1}F_3+(r_{21}c_3+r_{32}r_{13})M_6  \\
\nonumber  f_7 &=&  r_{21}r_{32}x^{r_{31}-1}y^{r_{32}-1}F_2+(r_{31}c_2+r_{21}r_{32})M_7   \\
\nonumber  f_8 &=&  -r_{32}r_{23}x^{r_{21}-1}z^{r_{23}-1}F_3-(r_{21}c_3+r_{23}r_{31})M_8 \\
\nonumber  f_9 &=&  r_{32}r_{23}y^{r_{32}-1}z^{r_{23}-1}F_1+(c_2c_3-r_{32}r_{23})M_9,
\end{eqnarray}
By above, $J$ is generated by $$F_1=x^{p+q+1}-yz^q, \ \ F_2=xz-y^2,\ \ F_3=x^{p+q}y-z^{q+1}$$
and the Jacobian ideal is
$$I=(J,\mathfrak{D})=(xz-y^2, x^{p+q+1}, x^{p+q}y,x^{p+q-1}y^2,yz^q,y^2z^{q-1},z^{q+1})$$
Set $\Delta= (x^{p+q+1}, x^{p+q}y,x^{p+q-1}y^2,yz^q,y^2z^{q-1},z^{q+1})$.
By a slight adaptation of Proposition~\ref{residual_gens} it suffices to show that $J\cap \Delta^n\subseteq JI^{n-1}$
for every $n\geq 1$. Since  $J$ is binomial prime ideal  and $\Delta$ is monomial then \cite[Corollary 1.5]{DB} implies that $J\cap \Delta^n$ is generated by binomials $(u-v)h$ where $u-v\in J$ and $h\in R$ is a monomial or $h=(u-v)^c$ for some positive integer $c\geq 1$. As elements  $uh,vh$ belong to $\Delta^n$, an easy calculation show that $(u-v)h\in JI^{n-1}$ as required. 
\end{proof}
\begin{Question}
Let $J\subseteq k[x_1,\ldots,x_n]$ be defining ideal of affine  monomial curve with parametric equation $x_1=u^{n_1},\ldots, x_m=u^{n_m}$. Let $I$ be the Jacobian ideal of $J$. For which types of parametrization, the pair $J\subseteq I$ is Aluffi torsion-free. 
\end{Question}
\begin{Example}
Let $J$ be an ideal in the ring $R=k[x_0,\ldots,x_n]$ with $n\geq 2$ generated  with all square free monomial ideal in degree $r$. Let $I$ stands for the Jacobian ideal of $J$. Then the pair $J\subset I$ is Aluffi torsion-free.
\end{Example}
\begin{proof}
Let $J=(x_{i_1}x_{i_2}\cdots x_{i_r}\ : \ 0\leq  i_1<\ldots<i_r\leq n\  )$. It is well known that $\hht J=r-1$.  The transpose Jacobian matrix of $J$ is 
$$\Theta(J)=\left[\begin{array}{cc|c}
x_{i_1}x_{i_2}\cdots x_{i_{r-1}} & \ 1\leq i_1<\ldots<i_{r-1}\leq n & 0 \\
\hline
& \textbf{*} & \Theta' \\
\end{array}
\right],
$$
where $\Theta'$ is the Jacobian matrix of the ideal $J'$ generated by all square free monomial ideal in degree $r$ in $k[x_1,\ldots,x_{n}]$. By induction on $n$ and elementary columns operation we get that the Jacobian ideal $I$ of $J$ is $I=(J\ ,\ x_i^rx_j^r \ :\ 0\leq i<j\leq n )$. By Proposition ~\ref{residual_gens}, it is enough to show that  for all $t\geq 1$
$$J\cap (x_i^{r}x_j^{r}\ :\ 0\leq i<j\leq n)^t\subseteq JI^{t-1}.$$
The proof of the latter inclusion  is based on the usual algorithmic procedure to find generators of the intersection of monomial ideal.
\end{proof}
\begin{Example}
	\begin{enumerate}
		\item  Let $M$ be a $2\times n$ generic matrix in the polynomial ring $R=k[x_i\ ; \ 1\leq i\leq 2n]$ with $n\geq 3$. Let $J=I_2(M)$. It is well-known that $\hht J=n-1$. Let $I=(J,I_{n-1}(\Theta))$ stands for  the Jacobian ideal of $J$. Since $M$ is the concatenation of $n$ scroll blocks of length $1$, then by \cite[Theorem 2.3]{AR}, $I_{n-1}(\Theta)=(x_i\ ; \ 1\leq i\leq 2n)^{n-1}$. In particular, the pair $J\subseteq I$ is Aluffi torsion-free.  
	\item Let $R=k[x_1,\ldots,x_9]$. Consider the  $3\times3$ generic matrix  $M $ in $R$ 
\[M=	\begin{bmatrix}
		x_1 & x_2 & x_3\\
		x_4 & x_5 & x_6\\
		x_7 & x_8 & x_9
	\end{bmatrix}. 
	\]
The ideal  $J=I_2(M)=(\Delta_1, \Delta_2, \Delta_3)$ has codimension $4$ and the Jacobian matrix of $J$ is of  the form 
	$$
	\Theta(J)=\left[
	\begin{array}{c|c|ccc}
	x_5 &  &  &  &   \\
	x_6 &  &  &  &   \\
	x_8 & \textbf{*} &  & \textbf{*} &   \\
	x_9 &  &  &  &   \\
	\hline
	0 & x_6 &  &  &   \\
	0 & x_9 &  &  \textbf{*} &   \\
		\hline
	0 &0  &  &  &   \\
	0 & 0 &  & \Theta' &   \\
	0 &0  &  &  & 
	\end{array}
	\right],
	$$
where the first block of $\Theta(J)$ is the Jacobian matrix of $\Delta_1=(x_1x_5-x_2x_4,\  x_1x_6-x_3x_4,\  x_1x_8-x_2x_7,\  x_1x_9-x_3x_7)$, the second block is the Jacobian matrix of $\Delta_2=(x_2x_6-x_3x_5, \ x_2x_9-x_3x_8)$ and in the last block $\Theta'$ is the Jacobian matrix of $\Delta_3=I_2(\begin{bmatrix}
	x_4 & x_5 & x_6\\
	x_7 & x_8 & x_9
\end{bmatrix})$. Note that $\hht(\Delta_2, \Delta_3)=3$ and $\hht \Delta_3=2$. We claim that $I_4(\Theta(J))=(x_1,\ldots,x_9)^4$ which proves that the pair $J\subseteq I$ is Aluffi torsion-free. By using the first part of the example with $n=3$, the $3$-minors of second and third blocks of $\Theta(J)$ is generated by $(x_6, x_9)(x_4,x_5,x_6,x_7,x_8,x_9)^2$. One has 
 $$(x_5, x_6, x_8, x_9)(x_6, x_9)(x_4,x_5,x_6,x_7,x_8,x_9)^2\in I_4(\Theta(J)).$$  
 Therefore by  changing the role of $x_1$ and $x_2$ and using above argument the assertion hold.  
\end{enumerate}

\begin{Question}
Let $M$ be a $n\times m$ generic matrix in the polynomial ring $R=k[x_{ij}\ ; \ 1\leq i\leq n \ , \  1\leq i\leq m ]$. Let $J=I_2(M)$ be the ideal generated by $2$-minors of $M$. Let $I$ be the Jacobian ideal of $J$. Is the pair $J\subseteq I$ Aluffi torsion-free?  
\end{Question}

\end{Example}

\begin{Example}
Let $J(f)=(\partial f/\partial x, \partial f/\partial y, \partial f/\partial z)\subseteq R= k[x,y,z]$ denote the gradient ideal of a reduced free divisor line arrangement $X=V(f)$ of degree $3$  in $\pp_k^2$. By \cite[Proposition 3.7]{LisbonProcDekker}, $J(f)$ is codimension $2$ perfect ideal.  Then by Hilbert-Burch theorem $J(f)$ is generated by $2$-minors of the $2\times 3$ matrix of linear forms in $R$. If $Y=V(J(f))\subseteq \pp_k^2$ is non-singular then the Jacobian ideal $I$ of $J(f)$ is $(x,y,z)$-primary. Therefore, by \cite[Corollary 2.7]{AR} the pair $J(f)\subseteq I$ is Aluffi torsion-free. 
\end{Example}
We warm up with a conjecture. 
\begin{Conjecure}
Let $X=V(f)$ be a reduced free divisor of line arrangement in $\pp_k^2$. Let $J(f)$ denote the gradient ideal of $f$ and $I$ stands for the Jacobian ideal of $J(f)$. Then $J(f)\subseteq I$ is Aluffi torsion-free. 
\end{Conjecure}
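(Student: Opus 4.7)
The conjecture is open, so this is a roadmap rather than a proof. The plan is to combine the Hilbert--Burch structure of $J(f)$ with the syzygy criterion of Theorem \ref{Torsion-Syzygy} and the local-to-global reduction of Proposition \ref{Extention of Rings}(d).

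Freeness of $X=V(f)$, a line arrangement of degree $d$, forces $J(f)$ to be a perfect codimension $2$ ideal, so by Hilbert--Burch there is a minimal graded resolution
\[
0\lar R(-d+1-a)\oplus R(-d+1-b)\xrightarrow{\varphi} R(-d+1)^3\lar J(f)\lar 0,
\]
where $(a,b)$ are the exponents of the arrangement and $a+b=d-1$. The two columns $\delta_1,\delta_2$ of $\varphi$ are the coefficient vectors of the logarithmic derivations that annihilate $f$, and they minimally generate $\mathcal{Z}=\mathrm{Syz}(J(f))$. By Theorem \ref{Torsion-Syzygy}(c), Aluffi torsion-freeness is equivalent to showing that the initial forms $\delta_1^*,\delta_2^*$ generate $\mathrm{Syz}(J(f)^*)$ in $\gr_I(R^3)$, for then every homogeneous syzygy of $J(f)^*$ automatically lifts through $\varphi\circ\psi$.

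To make this tractable, I would first invoke Proposition \ref{residual_gens} with $J_1=J(f)$ and $J_2=I_2(\Theta)$, where $\Theta$ is the Hessian of $f$; the hypothesis that the generators of $J(f)$ lie outside $I^2$ holds for $d\ge 3$ on degree grounds, since $I^2$ is then generated in degrees strictly larger than $d-1$. Proposition \ref{residual_gens} then reduces the conjecture to $J(f)\cap I_2(\Theta)^n\subseteq J(f)\,I^{n-1}$ for every $n\ge 1$. Next, Proposition \ref{Extention of Rings}(d) localizes the check to each singular point $p$ of the arrangement: at $p$ only the lines through $p$ survive, the localized gradient ideal is that of the central subarrangement $f_p=\prod_{L\ni p}L$, and such central line arrangements in $\AA^2$ are always free with exponents $(1,m_p-1)$ where $m_p$ is the multiplicity at $p$. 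In this local model the Hilbert--Burch matrix of $J(f_p)$ can be written down explicitly from the Euler derivation and a second derivation encoding the slopes, which makes a direct syzygy-lifting computation feasible.

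The chief obstacle is to rule out \emph{extraneous} syzygies of $J(f)^*$ in $\gr_I(R^3)$ that do not come from $\delta_1^*,\delta_2^*$: such ghost syzygies would arise from cancellations between the entries of $\varphi$ and elements of $I_2(\Theta)^n$ after reduction modulo $I^{n+1}$, and by the Corollary following Theorem \ref{I-adic} their absence is exactly the compatibility of form ideals that the criterion demands. The non-singular case handled in the preceding Example only uses that $I$ is $\mathfrak m$-primary, but for larger multiplicities $I$ is typically not primary, and a finer analysis is needed. I expect the argument to hinge on a Koszul-type description of $I_2(\Theta_{f_p})$ in terms of the linear factors of $f_p$, forcing any ghost syzygy to coincide with a combination of $\delta_1^*,\delta_2^*$ modulo $I$, and thereby yielding the required lift.
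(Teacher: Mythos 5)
This statement is posed in the paper as an open conjecture --- the authors explicitly ``warm up with a conjecture'' and offer no proof --- so there is no argument of theirs to compare yours against; the only settled case in the paper is the degree~$3$ example immediately preceding it, where $Y=V(J(f))$ is non-singular and $I$ is $(x,y,z)$-primary. Your submission is, as you say yourself, a roadmap rather than a proof, and it does contain genuine gaps that would have to be closed before it could become one.

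The most concrete problem is the local-to-global step. Proposition~\ref{Extention of Rings}(d) reduces the question to localizations at maximal ideals containing $I$, but $V(I)\subset\AA^3$ is the affine cone over the singular points of the arrangement, and the irrelevant maximal ideal $\fm=(x,y,z)$ lies on every component of that cone. Since the modules $J(f)\cap I^n/J(f)I^{n-1}$ are graded, they vanish if and only if they vanish after localizing at $\fm$; so the one localization you cannot avoid returns exactly the original problem, and the pleasant reduction to central subarrangements $f_p=\prod_{L\ni p}L$ with exponents $(1,m_p-1)$ only disposes of the points $q\neq 0$ of the cone. Beyond that, the step you label the ``chief obstacle'' --- showing that $\mathrm{Syz}(J(f)^*)$ is generated by the initial forms $\delta_1^*,\delta_2^*$ of the two Hilbert--Burch syzygies, i.e.\ that no ghost syzygies of the form ideal appear in $\gr_I(R^3)$ --- is precisely the content of the conjecture via Theorem~\ref{Torsion-Syzygy}(c), and your proposal offers an expectation about a Koszul-type description of $I_2(\Theta)$ rather than an argument. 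Everything before that point (Hilbert--Burch, the applicability of Proposition~\ref{residual_gens} since the degree-$(d-1)$ generators of $J(f)$ avoid $I^2$ for $d\geq 3$) is a correct and reasonable setup, but the conjecture remains open after your reductions just as it was before them.
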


\end{document}